\pdfoutput=1
\RequirePackage{ifpdf}
\ifpdf 
\documentclass[pdftex]{sigma}
\else
\documentclass{sigma}
\fi

\numberwithin{equation}{section}

\newtheorem{Theorem}{Theorem}[section]
\newtheorem{Lemma}[Theorem]{Lemma}
 { \theoremstyle{definition}
\newtheorem{Fact}[Theorem]{Fact}
\newtheorem{Remark}[Theorem]{Remark} }

\begin{document}

\allowdisplaybreaks

\renewcommand{\thefootnote}{$\star$}

\newcommand{\arXivNumber}{1512.07104}

\renewcommand{\PaperNumber}{049}

\FirstPageHeading

\ShortArticleName{Shell Polynomials and Dual Birth-Death Processes}

\ArticleName{Shell Polynomials and Dual Birth-Death Processes\footnote{This paper is a~contribution to the Special Issue
on Orthogonal Polynomials, Special Functions and Applications.
The full collection is available at \href{http://www.emis.de/journals/SIGMA/OPSFA2015.html}{http://www.emis.de/journals/SIGMA/OPSFA2015.html}}}

\Author{Erik A.~VAN DOORN}

\AuthorNameForHeading{E.A.~van~Doorn}

\Address{Department of Applied Mathematics, University of Twente,\\ P.O.~Box 217, 7500 AE Enschede, The Netherlands}

\Email{\href{mailto:e.a.vandoorn@utwente.nl}{e.a.vandoorn@utwente.nl}}
\URLaddress{\url{http://wwwhome.math.utwente.nl/~doornea/}}

\ArticleDates{Received January 02, 2016, in f\/inal form May 14, 2016; Published online May 18, 2016}

\Abstract{This paper aims to clarify certain aspects of the relations between birth-death
processes, measures solving a Stieltjes moment problem, and sets of parameters
def\/ining polynomial sequences that are orthogonal with respect to such a measure.
Besides giving an overview of the basic features of these relations,
revealed to a large extent by Karlin and McGregor, we investigate a duality concept
for birth-death processes introduced by Karlin and McGregor and its interpretation
in the context of shell polynomials and the corresponding orthogonal polynomials.
This interpretation leads to increased insight in duality, while it suggests a
modif\/ication of the concept of similarity for birth-death processes.}

\Keywords{orthogonal polynomials; birth-death processes; Stieltjes moment
problem; shell polynomials; dual birth-death processes; similar birth-death processes}

\Classification{42C05; 60J80; 44A60}

\renewcommand{\thefootnote}{\arabic{footnote}}
\setcounter{footnote}{0}

\section{Introduction}
\label{introduction}

In what follows a {\em measure} will always be a f\/inite positive Borel measure on
the real axis with inf\/inite support and f\/inite moments of all (positive) orders. It will be
convenient to assume throughout that the measure is normalized so that it becomes
a probability measure.
The {\em Hamburger moment problem} associated with a measure $\psi$ is said
to be {\em determined} ($\psi$ is det(H), for short) if~$\psi$ is uniquely
determined by its moments; otherwise, it is said to be {\em indeterminate}
($\psi$~is~indet(H)).
Similar terminology will be used for the {\em Stieltjes moment problem} associated
with $\psi$, in which we limit our scope to measures with support on the nonnegative
real axis, with~det(S) (indet(S)) replacing~det(H) (indet(H)).

Chihara~\cite{C68} showed that when a measure is indet(H) and has left-bounded
support, there is a unique solution
of the associated moment problem with the property that the minimum of its
support is maximal. We will refer to this solution as the {\em natural} solution.
It will be convenient to qualify a measure as {\em natural} also if it is the
solution of a determined moment problem. Note that the natural solution of an
indeterminate Hamburger moment problem may be the unique solution of a Stieltjes
moment problem.

Our point of departure is a measure $\psi$ on the {\em nonnegative} real
axis with moments
\begin{gather*}
m_n(\psi) := \int_{[0,\infty)} x^n \psi(dx), \qquad n \geq 0.
\end{gather*}
By assumption $m_0(\psi) = 1$. In what follows we allow $\psi$ to be indet(S) (and hence
indet(H)) but assume in this case that $\psi$ is the natural solution of the
associated moment problem.
The (monic) polynomials that are orthogonal with respect to $\psi$ will be denoted
by $P_n$.

As is well known there exist unique constants $c_n \in \mathbb{R}$ and
$d_{n+1} > 0$, $n\geq 1$, such that the polynomials~$P_n$ satisfy the
three-terms recurrence relation
\begin{gather}
P_n(x) = (x - c_n) P_{n-1}(x) - d_n P_{n-2}(x), \qquad n > 1,\nonumber\\
P_1(x) = x - c_1, \qquad P_0(x) = 1.\label{recP}
\end{gather}
Since the support of $\psi$ is a subset of the nonnegative real axis we actually have
$c_n>0$, and there is a more ref\/ined result (see, for instance, Chihara \cite[Corollary
to Theorem~I.9.1]{C78}) to the ef\/fect that there are numbers $\mu_0 \geq 0$ and
$\lambda_n > 0$, $\mu_{n+1}> 0$ for $n\geq 0$, such that
\begin{gather}
\label{rep}
c_n = \lambda_{n-1} + \mu_{n-1} \qquad \mbox{and} \qquad d_{n+1} = \lambda_{n-1}\mu_n, \qquad n \geq 1.
\end{gather}
(Further results in this vein can be found in \cite{C07}.)
Since $\lambda_n$ and $\mu_n$ may be interpreted as the birth rates and death rates,
respectively, of a {\em birth-death process} on the nonnegative integers, we will
refer to a collection of such constants as a {\em set of birth and death rates} (or a~{\em rate set}, for short).
More information on birth-death processes and their rates will be given in later sections,
but at this stage we note that, by Karlin and McGregor \cite[Lemma~1 and Lemma~6 on p.~527]{K57a} (see also \cite[Theorem~1.3]{C07}), we must have
$\mu_0 = 0$ unless $\psi$ has a f\/inite moment of order~$-1$, that is,
\begin{gather}
\label{m-1}
m_{-1}(\psi) := \int_{[0,\infty)} x^{-1} \psi(dx) < \infty,
\end{gather}
in which case $\mu_0$ may be any number in the interval $[0, 1/m_{-1}(\psi)]$.
Evidently, once $\mu_0$ has been chosen, the other rates are f\/ixed.

In the remainder of this section we will assume that \eqref{m-1} is satisf\/ied, so that,
in particular, $\psi(\{0\}) = 0$. Def\/ining the measure $\phi^{(0)}$ by
\begin{gather}
\label{phi0}
\phi^{(0)}([0,x]) := \frac{1}{m_{-1}(\psi)} \int_{[0,x]}y^{-1}\psi(dy), \qquad x \geq 0,
\end{gather}
and letting, for $a > 0$,
\begin{gather}
\label{phia}
\phi^{(a)} := \frac{1}{a+1}\big(a\delta_0 + \phi^{(0)}\big),
\end{gather}
where $\delta_0$ is the Dirac measure with mass $1$ at $0$, we observe that, for any
$a \geq 0$, $\phi^{(a)}$ is a~probability measure on the nonnegative real axis.
As a consequence there exists a sequence of (monic) polynomials $\big\{S^{(a)}_n\big\}$
that are orthogonal with respect to~$\phi^{(a)}$. Since, for all $a \geq 0$,
\begin{gather*}
\psi([0,x]) = (a+1)m_{-1}(\psi) \int_{[0,x]} y\phi^{(a)}(dy), \qquad x \geq 0,
\end{gather*}
we will, following Chihara \cite{C01}, refer to the polynomials $S^{(a)}_n$ as
{\em shell polynomials} corresponding to the orthogonal polynomial sequence~$\{P_n\}$. In the terminology of \cite[Section~I.7]{C78} the polyno\-mials~$P_n$
are, for any $a \geq 0$, the {\em kernel polynomials} with $K$-parameter $0$
corresponding to $\big\{S^{(a)}_n\big\}$.

Information on the status of the moment problem associated with $\phi^{(a)}$ is
given in the next theorem.
\begin{Theorem}[Chihara \protect{\cite[Theorem~2]{C62}}]\label{det}\quad
\begin{enumerate}\itemsep=0pt
\item[$(i)$] The measure $\phi^{(0)}$ is {\rm det(H)}.
\item[$(ii)$] The measure $\phi^{(a)}$, $a > 0$, is {\rm det(S)} if and only if the measure
$\psi$ is {\rm det(S)}.
\end{enumerate}
\end{Theorem}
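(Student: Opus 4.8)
The plan is to push everything down to the recurrence coefficients. From the displayed identity $\psi([0,x])=(a+1)m_{-1}(\psi)\int_{[0,x]}y\,\phi^{(a)}(dy)$ one obtains $d\psi=(a+1)m_{-1}(\psi)\,x\,d\phi^{(a)}$, and hence
\[
\int_{[0,\infty)}xf(x)\,\phi^{(a)}(dx)=\frac{1}{(a+1)m_{-1}(\psi)}\int_{[0,\infty)}f(x)\,\psi(dx)
\]
for every polynomial $f$; in particular $m_0(\phi^{(a)})=1$ and $m_n(\phi^{(a)})=m_{n-1}(\psi)/\big((a+1)m_{-1}(\psi)\big)$ for $n\ge1$, and, as the text already records, $\{P_n\}$ is the monic kernel-polynomial sequence of $\{S^{(a)}_n\}$ with $K$-parameter $0$. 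The factor $x$ makes all of this insensitive to the atom $\phi^{(a)}(\{0\})$. I would then make the correspondence quantitative: from the kernel-polynomial identity $xP_n(x)=S^{(a)}_{n+1}(x)-r_{n+1}S^{(a)}_n(x)$, where $r_{n+1}=S^{(a)}_{n+1}(0)/S^{(a)}_n(0)<0$, together with \eqref{recP} and the three-terms recurrence satisfied by $S^{(a)}_n$, I would solve for the recurrence coefficients of $\{S^{(a)}_n\}$. The outcome is that $\phi^{(a)}$ again admits a representation of the form \eqref{rep}, whose rate set is (in the language of the later sections) the dual of a rate set for $\psi$ attached to the admissible choice $\mu_0=\big((a+1)m_{-1}(\psi)\big)^{-1}$, and for which the $\mu_0$-value belonging to $\phi^{(a)}$ itself is $0$.

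With this dictionary both statements become assertions about rate sets. For (ii) I would invoke the classical criterion (Stieltjes' convergence theorem for the associated $S$-fraction, equivalently Karlin--McGregor~\cite{K57a}) that a measure on $[0,\infty)$ represented by rates $\lambda_n,\mu_n$ with $\mu_0=0$ is det(S) precisely when $\sum_n\lambda_n^{-1}=\infty$ or $\sum_n\mu_n^{-1}=\infty$. Passing to the dual interchanges these two series up to finitely many terms and a bounded contribution from $\mu_0$, so the criterion holds for $\phi^{(a)}$ iff it holds for $\psi$; moreover the disjunction is the $\mu_0$-free statement that $\psi$ is det(S), so the freedom in choosing $\mu_0$ causes no trouble. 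The same equivalence can also be obtained directly with measures: if $\psi$ is det(S), any solution $\rho$ of the $\phi^{(a)}$-moment problem yields the nonnegative solution $(a+1)m_{-1}(\psi)\,x\,d\rho$ of the $\psi$-moment problem, hence $=\psi$, and matching total masses forces $\rho=\phi^{(a)}$; conversely, two distinct solutions of the $\psi$-moment problem sharing one common finite moment of order $-1$ give rise to two distinct solutions of the $\phi^{(a)}$-moment problem.

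For (i) the additional input is that $\psi$ is the natural solution. That $\phi^{(0)}$ is det(S) follows exactly as above: a solution $\rho$ of its moment problem produces $\sigma:=m_{-1}(\psi)\,x\,d\rho$, a solution of the $\psi$-moment problem with $\sigma(\{0\})=0$ and $m_{-1}(\sigma)=m_{-1}(\psi)$, and the natural solution is the unique one carrying this least possible value of the moment of order $-1$, so $\sigma=\psi$ and $\rho=\phi^{(0)}$. To upgrade det(S) to det(H) one must rule out a Hamburger solution of $\phi^{(0)}$ placing mass to the left of $0$; for this I would appeal to Chihara's chain-sequence analysis of the Stieltjes versus Hamburger problem~\cite{C78}, the idea being that the chain sequence associated with $\phi^{(0)}$ --- read off from the recurrence coefficients found in the first step --- is precisely the one for which the construction has used up all admissible room at the left endpoint, leaving no possibility of extending the support leftward. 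I expect this last step to be the genuine obstacle: it is where the asymmetry between the Hamburger and Stieltjes problems bites, and it really does require the sharp characterisation of the natural solution (equivalently, of the extremal value $\mu_0=1/m_{-1}(\psi)$) and not merely the fact that $\psi$ happens to be a solution.
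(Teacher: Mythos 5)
First, note that the paper does not prove this theorem at all: it is imported verbatim from Chihara \cite[Theorem~2]{C62}, whose proof runs through the chain-sequence correspondence recalled in Section~\ref{chainsequences} ($\phi^{(0)}$ corresponds to the maximal parameter sequence, i.e.\ to the extremal choice $\mu_0=1/m_{-1}(\psi)$). Measured against that, your proposal has a concrete error and a genuine gap. The error is the determinacy criterion you invoke for part~$(ii)$: it is \emph{not} true that a measure with rates $\lambda_n,\mu_n$, $\mu_0=0$, is det(S) precisely when $\sum_n\lambda_n^{-1}=\infty$ or $\sum_n\mu_n^{-1}=\infty$. The correct criterion is \eqref{detS}, involving the products $\pi_n$ of \eqref{pi}; for instance $\lambda_n=(n+1)^2$, $\mu_n=n^2$ gives $\pi_n=1$, hence det(S), although both reciprocal series converge. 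Moreover \eqref{detS} is stated for the rate set with $\mu_0=0$; for the $\psi$-rate set with $\mu_0>0$ divergence of \eqref{unique} is equivalent to ``$\psi$ det(S) \emph{or} $m_{-1}(\psi)=1/\mu_0$'' (\cite[Theorem~15]{K57a}), so the hypothesis $a>0$, i.e.\ $\mu_0<1/m_{-1}(\psi)$, must be used explicitly rather than waved away as ``no trouble''. With these corrections your duality-of-series argument does work (it is exactly the computation around \eqref{dualunique}), and your direct measure-theoretic argument for $(ii)$ is essentially sound: the ``if'' direction is clean, and the ``only if'' direction can be completed for $a>0$ by producing two distinct Stieltjes solutions of the $\psi$-problem with $m_{-1}\le(a+1)m_{-1}(\psi)$, e.g.\ $\psi$ and a slight convex perturbation towards an $N$-extremal solution $\psi_\xi$, $0<\xi\le\xi_1$, all of which have finite $m_{-1}$ since $\xi_1>0$ when $\psi$ is indet(S).

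The genuine gap is part~$(i)$, which is the substance of Chihara's theorem. Your argument yields at most det(S) for $\phi^{(0)}$, and even that only in the indet(S) case if one grants your unproved assertion that the natural solution is the \emph{unique} Stieltjes solution attaining the minimal value of $m_{-1}$: your $\sigma=m_{-1}(\psi)\,x\,d\rho$ satisfies only $m_{-1}(\sigma)\le m_{-1}(\psi)$, so you need both the minimality of $m_{-1}(\psi)$ among all solutions and uniqueness of the minimizer, a statement of essentially the same depth as what is to be proved. More importantly, the claim of $(i)$ is det(H): no Hamburger solution may place mass on the negative axis either, and det(S) together with absence of an atom at $0$ does not by itself exclude this -- det(S) measures can be indet(H). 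You acknowledge this and defer it to ``Chihara's chain-sequence analysis'' with only a heuristic about having ``used up all admissible room at the left endpoint''; but that step \emph{is} the theorem being proved in \cite{C62}, so as it stands the proposal establishes (after repair) part~$(ii)$ and the easy half of~$(i)$, while the det(H) statement remains unproven.
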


Evidently, $\phi^{(a)}$ cannot be a natural measure if it is indet(S), but we
will see that $\phi^{(a)}$ nevertheless has a certain signif\/icance in this case,
in particular in the context of birth-death processes.
Before introducing our f\/indings it will be useful to state already some facts about
the relations between birth-death processes, rate sets, and measures that will
be expounded in Section~\ref{birthdeath}.

\begin{Fact}\label{fact1}
A birth-death process uniquely def\/ines a rate set.
\end{Fact}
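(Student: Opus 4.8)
Recall that, as will be made precise in Section~\ref{birthdeath}, a birth-death process is a continuous-time Markov chain $\mathcal{X}=\{X(t):t\ge 0\}$ on the state space $\{0,1,2,\dots\}$ (possibly with an extra absorbing state that can be entered only from state~$0$) whose transition function $p_{ij}(t):=\Pr\{X(t)=j\mid X(0)=i\}$ is standard, i.e.\ $p_{ij}(t)\to\delta_{ij}$ as $t\downarrow 0$, and for which transitions between non-adjacent states are negligible near the origin, that is, $p_{ij}(t)=o(t)$ as $t\downarrow 0$ whenever $|i-j|\ge 2$. The plan is to recover the rate set from the transition function by differentiation at the origin and to observe that this recovery is unambiguous, so that a given process cannot correspond to two distinct rate sets.

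First I would invoke the classical result from the theory of continuous-time Markov chains that, for a standard transition function, the one-sided derivatives at the origin
\begin{gather*}
q_{ij}:=\lim_{t\downarrow 0}\frac{p_{ij}(t)-\delta_{ij}}{t},\qquad i\ne j,
\end{gather*}
exist and are finite, while $\lim_{t\downarrow 0}t^{-1}\bigl(1-p_{ii}(t)\bigr)$ exists in $[0,\infty]$. For a birth-death process the former limits vanish unless $j=i\pm 1$, so one may unambiguously set $\lambda_n:=q_{n,n+1}$ for $n\ge 0$ and $\mu_n:=q_{n,n-1}$ for $n\ge 1$; the rate $\mu_0$, when it is present, is read off in the same way as the instantaneous rate of leaving the state space from state~$0$, namely $\mu_0:=\lim_{t\downarrow 0}t^{-1}\bigl(1-\sum_{j\ge 0}p_{0j}(t)\bigr)$. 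Each of these quantities is defined purely in terms of the transition function, which is part of the data of the process; hence the rate set is determined by $\mathcal{X}$.

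The only thing that needs to be borrowed from outside is the existence and finiteness of the off-diagonal derivatives $q_{ij}$ — a theorem going back to Kolmogorov, Doob and L\'evy — together with the (easy) observation, reviewed in Section~\ref{birthdeath}, that for a birth-death process the generator has exactly the tridiagonal form just described, with $q_{nn}=-(\lambda_n+\mu_n)$. Granting this, there is no further obstacle: the recovery of $\{\lambda_n,\mu_n\}$ from $\{p_{ij}(t)\}$ is immediate, so the write-up will really amount to a pointer to the standard Markov-chain facts rather than a self-contained argument — which is why the statement is recorded here merely as a fact. I note in passing that the converse assertion, that a rate set determines a process, is genuinely more delicate, and the attendant non-uniqueness is precisely what is discussed in the sequel.
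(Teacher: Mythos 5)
Your proposal is correct and follows essentially the same route as the paper: the rates are recovered by differentiating the transition functions at the origin, which in the paper's setup is simply the observation that the assumed Kolmogorov backward/forward equations with $P(0)=I$ give $P'(0)=Q$, so the tridiagonal $q$-matrix (and hence the rate set, including $\mu_0$ via the diagonal entry together with $\lambda_0$) is determined by the process. Your appeal to the general Kolmogorov--Doob--L\'evy differentiability theorem is a slightly heavier but equivalent way of securing the same derivative identification.
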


\begin{Fact}\label{fact2}
A birth-death process uniquely def\/ines a measure on $[0,\infty)$
through Karlin and McGregor's representation formula~\eqref{spectralrep} for the
transition functions of a birth-death process. This measure is natural or of a type
known as {\em Nevanlinna extremal}.
\end{Fact}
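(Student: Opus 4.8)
The plan is to derive both assertions from Karlin and McGregor's spectral representation~\eqref{spectralrep} for the transition function of the process, whose validity for an arbitrary birth-death process is the substance of Section~\ref{birthdeath}; once that is available, uniqueness of the associated measure is almost immediate. Specialising~\eqref{spectralrep} to the transition probability $P_{00}(\cdot)$ of remaining in state~$0$, and using that the orthogonal polynomial of degree~$0$ is the constant~$1$ while~$\psi$ is normalised to a probability measure, one sees that $P_{00}(\cdot)$ is exactly the Laplace transform of~$\psi$. Since a finite positive measure on $[0,\infty)$ is determined by its Laplace transform, the single function~$P_{00}$ --- a fortiori the whole transition function --- fixes~$\psi$. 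This should be contrasted with Fact~\ref{fact1}: the rate set determines only the coefficients $c_n$ and $d_{n+1}$ in~\eqref{recP}, equivalently only the moments of~$\psi$, and when the Stieltjes moment problem for~$\psi$ is indeterminate there is a large convex family of measures sharing these moments; it is the finer information in the transition function, not just in the rates, that singles out one of them.

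For the characterisation of~$\psi$ I would argue by cases. If the moment problem for~$\psi$ is det(S), then~$\psi$ is the unique solution of a determined moment problem and hence natural in the sense of the introduction; this conclusion survives even when the Hamburger problem is indeterminate, because any Hamburger solution with support in $[0,\infty)$ must coincide with the unique Stieltjes solution, which therefore attains the maximal infimum of the support. If the moment problem is indet(S), hence indet(H), one must show that~$\psi$ is Nevanlinna extremal, that is, that the polynomials are dense in $L^2(\psi)$. Here I would invoke the operator picture behind~\eqref{spectralrep}: a birth-death process is reversible, so its transition function extends to a self-adjoint contraction semigroup on a suitably weighted $\ell^2$ space, and the generator of that semigroup is a self-adjoint extension of the symmetric tridiagonal (Jacobi) operator determined by the rate set; because this extension acts on the sequence space itself and not on a larger Hilbert space, it is one of the canonical (von Neumann) extensions, and the spectral measure of a canonical extension is precisely an $N$-extremal solution of the moment problem. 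Equivalently, one may follow Karlin and McGregor \cite{K57a} directly and obtain~$\psi$ as a weak limit of the purely atomic spectral measures of the truncations of the process to $\{0,1,\dots,n\}$ --- whose atoms are governed by~$P_{n+1}$ together with the boundary behaviour imposed at state~$n$ --- and observe that such limits can only be $N$-extremal.

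\textbf{The hard part} is precisely this last step. In the indeterminate case the moments are compatible with a great many further solutions, typically absolutely continuous, and one must rule all of them out as spectral measures of genuine birth-death processes. Doing so calls either for the Feller-type boundary classification of birth-death processes --- identifying which self-adjoint extensions of the tridiagonal operator generate sub-stochastic semigroups and checking that these are all canonical --- or, on the limiting route, for the fact, peculiar to the orthogonal-polynomial setting, that weak limits of the finite-section measures are necessarily $N$-extremal. That this is where the real content lies is exactly why the statement is recorded here merely as a fact, to be established in Section~\ref{birthdeath}.
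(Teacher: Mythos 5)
Your proposal is correct in substance, but it is scaffolded quite differently from the paper, which offers no self-contained argument at all: Fact~\ref{fact2} is established there purely by citation, via the remarks in Section~\ref{basic} that the representation~\eqref{spectralrep} holds with the natural measure for the unique or minimal $Q$-process (Karlin--McGregor~\cite{K57a}) and, when the series~\eqref{unique} converges, for every other $Q$-process with $\psi$ replaced by an appropriate $N$-extremal solution $\psi_\xi$ (this is~\cite{D87}, invoked again for Fact~\ref{fact4} and in Section~\ref{dualbirthdeath}). Your Laplace-transform step --- $p_{00}(t)=\int_{[0,\infty)} e^{-xt}\psi(dx)$, so the transition functions pin down $\psi$ --- is a nice explicit justification of a uniqueness claim the paper leaves implicit, and your operator-theoretic reading (generator of the reversible semigroup on $\ell^2(\pi)$ as a self-adjoint extension of the Jacobi operator, canonical extensions corresponding to $N$-extremal solutions) is a legitimate alternative to the route through~\cite{D87}. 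But be aware that the decisive step --- that every $Q$-process satisfying both Kolmogorov equations really is governed by such a canonical extension, equivalently that only the measures $\psi_\xi$ can occur in~\eqref{spectralrep} --- is precisely what \cite{K57a} and \cite{D87} prove, and your sketch, as you yourself acknowledge, defers it; likewise the assertion that weak limits of the truncated spectral measures ``can only be $N$-extremal'' is not automatic and needs an argument. Two smaller precision points: the dichotomy governing uniqueness of the process is divergence versus convergence of~\eqref{unique}, not det(S) versus indet(S) --- for $\mu_0>0$ the series can diverge with $\psi$ indet(S), provided $\mu_0 m_{-1}(\psi)=1$ --- though your case split still yields the right conclusion, since in the indeterminate case the natural measure is itself the $N$-extremal solution $\psi_{\xi_1}$; and the $N$-extremality delivered by canonical extensions is the Hamburger one (polynomials dense in $L^2(\psi)$), which should be reconciled with the maximal-point-mass definition the paper adopts in Section~\ref{spectral}.
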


\begin{Fact}\label{fact3}
A rate set $\{\lambda_n,\mu_n\}$ uniquely def\/ines, through \eqref{rep},
a {\em natural} measure on $[0,\infty)$ with respect to which the polynomials
$P_n$ of \eqref{recP} are orthogonal. Conversely, a natural measure on~$[0,\infty)$,
with monic orthogonal polynomials $P_n$ and moment $m_{-1}$ of order $-1$,
def\/ines, through~\eqref{recP} and~\eqref{rep}, a~rate set $\{\lambda_n,\mu_n\}$, which,
if $m_{-1}=\infty$, is unique and satisf\/ies $\mu_0=0$. If $m_{-1}<\infty$ the
measure def\/ines an inf\/inite family of rate sets indexed by the value of $\mu_0$,
which can be any number in the interval $[0, 1/m_{-1}]$.
\end{Fact}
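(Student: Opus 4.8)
The plan is to assemble Fact~\ref{fact3} from three ingredients that are already available: Favard's theorem, Chihara's characterisation of the orthogonal polynomial sequences whose true interval of orthogonality is contained in $[0,\infty)$, and the Karlin--McGregor description of the admissible values of $\mu_0$ recalled around~\eqref{m-1}. For the first assertion I would start from a rate set $\{\lambda_n,\mu_n\}$, i.e.\ numbers $\mu_0\geq0$ and $\lambda_n>0$, $\mu_{n+1}>0$ for $n\geq0$, and \emph{define} $c_n:=\lambda_{n-1}+\mu_{n-1}$ and $d_{n+1}:=\lambda_{n-1}\mu_n$ for $n\geq1$. Since $c_n>0$ and $d_{n+1}>0$, the recurrence~\eqref{recP} produces monic polynomials $P_n$, and Favard's theorem furnishes a (normalised) measure orthogonalising them. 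Because~\eqref{rep} then holds by construction with positive rates, the ``if'' part of \cite[Corollary to Theorem~I.9.1]{C78} shows the true interval of orthogonality lies in $[0,\infty)$. If the associated Hamburger problem is determined, this measure is unique; if it is indeterminate its support is left-bounded, so by~\cite{C68} the natural solution exists, is the unique solution whose support has maximal left endpoint, and is again carried by $[0,\infty)$. Either way one obtains a single well-defined natural measure $\psi$ on $[0,\infty)$ with monic orthogonal polynomials $P_n$, uniqueness being part of the meaning of ``natural''.

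For the converse, starting from a natural measure $\psi$ on $[0,\infty)$ with monic orthogonal polynomials $P_n$, the coefficients $c_n$, $d_{n+1}$ in~\eqref{recP} are uniquely determined by $\psi$, and since the support of $\psi$ lies in $[0,\infty)$ the refined result quoted just before~\eqref{rep} guarantees at least one representation of the form~\eqref{rep} with positive rates. It then remains to describe the freedom in this representation, and here I would invoke Karlin and McGregor \cite[Lemma~1 and Lemma~6 on p.~527]{K57a} (see also \cite[Theorem~1.3]{C07}): $\mu_0$ must equal $0$ when $m_{-1}=\infty$ and may be prescribed to be any number in $[0,1/m_{-1}]$ when $m_{-1}<\infty$, and once $\mu_0$ has been fixed the remaining rates follow unambiguously from $\lambda_0=c_1-\mu_0$ and, recursively, $\mu_n=d_{n+1}/\lambda_{n-1}$, $\lambda_n=c_{n+1}-\mu_n$ for $n\geq1$. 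This yields a unique rate set, with $\mu_0=0$, when $m_{-1}=\infty$, and an infinite family indexed by $\mu_0\in[0,1/m_{-1}]$ otherwise.

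The step I expect to require the most care is, in this converse direction, the assertion that \emph{every} value $\mu_0\in[0,1/m_{-1}]$ -- not merely the existence of some admissible $\mu_0$ -- really does produce a legitimate rate set, i.e.\ that the recursion above never generates a non-positive rate. This positivity, controlled by the relevant continued-fraction (chain-sequence) bound on $\mu_0$, is precisely what the Karlin--McGregor lemmas establish, so the honest argument at this point is a careful appeal to those results rather than a fresh computation; the Favard--Chihara package used in the first assertion is routine by comparison. A fuller development of all of these relations is given in Section~\ref{birthdeath}.
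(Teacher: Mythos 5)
Your proposal is correct and follows essentially the same route the paper itself takes: Favard's theorem plus the Karlin--McGregor/Chihara results on nonnegative support for the forward direction (as in Section~\ref{basic}), and \cite[Corollary to Theorem~I.9.1]{C78} together with \cite[Lemma~1 and Lemma~6 on p.~527]{K57a} for the converse, exactly the observations the paper summarizes in Section~\ref{introduction} when it asserts Fact~\ref{fact3}. Your flagged delicate step (positivity of the rates for every $\mu_0\in[0,1/m_{-1}]$) is handled in the paper by the same appeal to the Karlin--McGregor lemmas, restated via chain sequences in Section~\ref{chainsequences}.
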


\begin{Fact}\label{fact4}
 A rate set $\{\lambda_n,\mu_n\}$ uniquely def\/ines a birth-death
process if and only if at least one of the following conditions prevails:
\begin{enumerate}\itemsep=0pt
\item[(i)] the natural measure def\/ined by the rate set is det(S);
\item[(ii)] $\mu_0>0$ and the natural measure def\/ined by the rate set has
$m_{-1} = \mu_0^{-1}$.
\end{enumerate}
Otherwise, there is an inf\/inite, one-parameter family of birth-death processes with
the given rates. Two members of this family may be identif\/ied as extreme, and are
known as the {\em minimal process} (associated with the natural measure) and the
{\em maximal process}.
\end{Fact}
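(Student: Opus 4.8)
The plan is to deduce Fact~\ref{fact4} from an explicit description of \emph{all} birth-death processes that share the given rate set $\{\lambda_n,\mu_n\}$, phrased in terms of the measures admissible in the representation formula~\eqref{spectralrep}. The first point is that existence is never at issue: a rate set always determines Karlin and McGregor's minimal solution of the Kolmogorov differential equations --- the \emph{minimal} birth-death process --- and the measure this process produces through~\eqref{spectralrep} is precisely the natural measure $\psi$ attached to the rate set via \eqref{recP} and \eqref{rep} (cf.\ Fact~\ref{fact3}). So the assertion is purely a uniqueness-versus-multiplicity dichotomy, and everything comes down to counting the remaining processes.

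By Fact~\ref{fact1} and Fact~\ref{fact2}, any birth-death process with the prescribed rates carries a representation measure that is natural or Nevanlinna extremal; and because all these processes obey the same recurrence \eqref{recP}, such a measure is a solution --- supported on $[0,\infty)$ --- of the moment problem of $\psi$. The death rate $\mu_0$ enters through the Karlin and McGregor lemmas quoted before~\eqref{m-1}: when $\mu_0=0$ every Stieltjes solution of $\psi$'s moment problem returns a birth-death process with these rates, whereas when $\mu_0>0$ the admissible measures $\sigma$ are exactly those meeting the bound $m_{-1}(\sigma)\le\mu_0^{-1}$ (so that the natural measure $\psi$, which realizes the smallest value $m_{-1}(\psi)$, always qualifies, and $m_{-1}(\psi)=\infty$ forces $\mu_0=0$). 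Consequently the birth-death processes with rate set $\{\lambda_n,\mu_n\}$ are in one-to-one correspondence with $\psi$ together with the Nevanlinna-extremal Stieltjes solutions $\sigma$ of $\psi$'s moment problem satisfying $m_{-1}(\sigma)\le\mu_0^{-1}$.

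With this in place the dichotomy is almost immediate. If $\psi$ is det(S) there is only the solution $\psi$, hence exactly one process --- case~(i). If $\psi$ is indet(S), its Nevanlinna-extremal Stieltjes solutions fill a one-parameter arc along which $m_{-1}$ is strictly monotone, attaining its minimum $m_{-1}(\psi)$ only at the endpoint $\psi$ and increasing to $+\infty$ (realized by the Nevanlinna-extremal solution with an atom at the origin) at the opposite end. Hence the admissible portion $\{\sigma:m_{-1}(\sigma)\le\mu_0^{-1}\}$ of this arc collapses to the single point $\psi$ precisely when $\mu_0>0$ and $\mu_0^{-1}=m_{-1}(\psi)$ --- case~(ii) --- and is a genuine sub-arc otherwise ($\mu_0=0$, or $0<\mu_0<1/m_{-1}(\psi)$), in which case one obtains a one-parameter family of processes whose two extreme members are the minimal process ($\sigma=\psi$) and the maximal process ($\sigma$ the Nevanlinna-extremal solution with $m_{-1}(\sigma)=\mu_0^{-1}$, i.e.\ the atom-at-the-origin solution when $\mu_0=0$). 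I expect the delicate step to be this structural input about the indeterminate Stieltjes problem --- that the natural solution is the \emph{unique} minimizer of $m_{-1}$ and that $m_{-1}$ runs strictly monotonically over the Nevanlinna-extremal arc, which is exactly what singles out the boundary case~(ii); pinning down which measures are admissible for a prescribed $\mu_0$ and keeping track of honest versus dishonest (and of the minimal/maximal labels) is then routine.
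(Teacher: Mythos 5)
Your overall picture --- that the $Q$-processes sharing a given rate set correspond to the natural measure together with certain $N$-extremal solutions inserted into \eqref{spectralrep}, and that uniqueness is the question of whether this admissible set is a singleton --- is the right one, and it is close in spirit to the indeterminate-rate analysis of \cite{D87} that the paper invokes for the second half of the Fact. The paper itself, however, goes a shorter way: uniqueness of the $Q$-process is equivalent to divergence of the series \eqref{unique} (Karlin--McGregor), which is translated into measure terms by \eqref{detS} when $\mu_0=0$ and by \cite[Theorem~15]{K57a} when $\mu_0>0$; the one-parameter family and the minimal/maximal processes are quoted from \cite{K57a} and \cite{D87}. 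As a proof, your argument has a genuine gap at its pivotal step: the claim that for $\mu_0>0$ the admissible representing measures are \emph{exactly} the $N$-extremal Stieltjes solutions $\sigma$ with $m_{-1}(\sigma)\le\mu_0^{-1}$, combined with the claim that $m_{-1}$ is strictly monotone along the $N$-extremal arc, attains its minimum only at the natural solution, and tends to $+\infty$ at the solution with an atom at the origin. Neither claim is proved, and the authority you cite for the first --- the Karlin--McGregor lemmas quoted before \eqref{m-1} --- says something different: those lemmas describe which values of $\mu_0$ are compatible with a \emph{given natural measure} (this is Fact~\ref{fact3}), not which solutions of an indeterminate problem represent honest $Q$-processes for a \emph{fixed rate set}. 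The latter is precisely the content of \cite{D87}, where admissibility is expressed as $\xi_1^d\le\xi\le\xi_1$; your $m_{-1}$-criterion matches it only after one proves $m_{-1}\bigl(\psi_{\xi_1^d}\bigr)=1/\mu_0$ and the strict monotonicity of $\xi\mapsto m_{-1}(\psi_\xi)$ (e.g., via the Nevanlinna parametrization). Since these assertions essentially encode the equivalence you are trying to establish, the proposal as written leaves the core of Fact~\ref{fact4} unproved.

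A secondary slip: for $\mu_0=0$ it is not true that ``every Stieltjes solution'' of the moment problem returns a birth-death process; inserting a non-extremal solution into \eqref{spectralrep} does not yield the transition functions of a $Q$-process, and only the $N$-extremal measures $\psi_\xi$, $0\le\xi\le\xi_1$, correspond to processes, as stated in Section~\ref{basic}. Your later wording (``Nevanlinna-extremal arc'') suggests this is what you intend, but the admissibility statement must be restricted to $N$-extremal solutions from the outset, since otherwise the ``one-parameter family'' count would already be wrong.
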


Fact~\ref{fact1} is a trivial consequence of the def\/inition of a birth-death process in
Section~\ref{basic}; Facts~\ref{fact2} and~\ref{fact3} follow from the seminal work of Karlin and
McGregor \cite{K57a} (Fact~\ref{fact3} also summarizes earlier observations in this section);
for Fact~\ref{fact4} we refer to~\cite{K57a} again and, for the second part, to~\cite{D87}.
Note that, by Fact~\ref{fact3}, our measure $\psi$ def\/ines inf\/initely many rate sets, since
$m_{-1}(\psi) < \infty$ by assumption.

Our main goal in this paper is to interpret and characterize the measures $\phi^{(a)}$,
$a \geq 0$, and the relation between the measures $\psi$ and~$\phi^{(a)}$, in the
context of birth-death processes. Concretely, we will display a one-to-one
correspondence between the measures~$\phi^{(a)}$, $a\geq 0$, and the rate sets with
$\mu_0>0$ def\/ined by $\psi$ (to which we will refer as {\em $\psi$-rate sets} for
short). Moreover, if $\phi^{(a)}$ is det(S), then the birth-death process (uniquely)
def\/ined by $\phi^{(a)}$ and the (unique) birth-death process whose rate set is the
$\psi$-rate set corresponding to $\phi^{(a)}$ are shown to be {\em dual} to each
other in sense of Karlin and McGregor~\cite[Section~6]{K57b}. As a result the duality
concept for birth-death processes can be extended to families of birth-death processes
that are {\em similar} in the sense of~\cite{L00}, after a slight modif\/ication of
the def\/inition of similarity.

When $\phi^{(a)}$ is indet(S) -- and hence, by Theorem \ref{det}, $a>0$ -- the
situation is more complicated since the duality concept for birth-death processes can
be applied only to minimal and maximal processes when a rate set does not def\/ine a
birth-death process uniquely (see \cite{D87}).
However, we will see that in this case $\phi^{(a)}$ is Nevanlinna extremal (as stated
already by Berg and Christiansen~\cite{B11}) and corresponds to a {\em maximal}
birth-death process, which happens to be dual to the {\em minimal} process whose
rate set is the $\psi$-rate set corresponding to~$\phi^{(a)}$. We will also present a~counterpart of this result.

In the next section we will collect some further notation, terminology and
preliminary results about shell polynomials, rate sets, and measures, while in Section
\ref{birthdeath} the relevant properties of birth-death processes are set forth and put
in proper perspective. Our f\/indings are detailed in Section~\ref{results}.

\section{Preliminaries}

\subsection{Shell polynomials and rate sets}
\label{shellpolynomials}

Applying \cite[Corollary to Theorem I.9.1]{C78} to the polynomials $S^{(a)}_n$, we
conclude that, for any $a \geq 0$, there exist constants $\mu^{(a)}_0 \geq 0$ and
$\lambda^{(a)}_n > 0$, $\mu^{(a)}_{n+1}> 0$ for $n\geq 0$, such that
\begin{gather}
S^{(a)}_n(x) = \big(x - \lambda^{(a)}_{n-1}-\mu^{(a)}_{n-1}\big) S^{(a)}_{n-1}(x) -
\lambda^{(a)}_{n-2}\mu^{(a)}_{n-1} S^{(a)}_{n-2}(x), \qquad n > 1,\nonumber\\
S^{(a)}_1(x) = x - \lambda^{(a)}_0 - \mu^{(a)}_0, \qquad S^{(a)}_0(x) = 1.\label{recS}
\end{gather}
If $\phi^{(a)}$ is natural and $m_{-1}(\phi^{(a)}) = \infty$ (so in particular
if $\phi^{(a)}$ is det(S) and $a > 0$) then, by \cite[Lemma~1 and Lemma~6 on p.~527)]{K57a} again, we must have $\mu_0^{(a)} = 0$. In other circumstan\-ces~$\mu^{(a)}_0$ may also be chosen positive, but it will be convenient to set
$\mu_0^{(a)} = 0$ by def\/inition in what follows.

We can now relate the parameters in the recurrence relation~\eqref{recS}
for the polyno\-mials~$S_n^{(a)}$ to the parameters $c_n$ and $d_n$ in the recurrence
relation~\eqref{recP}. Indeed, since the polyno\-mials~$P_n$ are the kernel polynomials
with $K$-parameter $0$ corresponding to $\big\{S_n^{(a)}\big\}$, we have, by
\cite[Theo\-rem~I.9.1]{C78},
\begin{gather*}
c_n = \lambda_{n-1}^{(a)} + \mu_n^{(a)} \qquad \mbox{and} \qquad
	d_{n+1} = \lambda_n^{(a)}\mu_n^{(a)}, \qquad n \geq 1.
\end{gather*}
Subsequently def\/ining birth rates $\lambda_n$ and death rates $\mu_n$ by
\begin{gather}
\label{dual}
\lambda_n = \mu_{n+1}^{(a)} \qquad \mbox{and} \qquad \mu_n = \lambda_n^{(a)}, \qquad n \geq 0,
\end{gather}
it follows that we have regained \eqref{rep}. So we see that we can parametrize the
birth and death rates in the representation~\eqref{rep} by the value of~$\mu_0$, but also
by the size $a$ of the atom at $0$ of the measure $\phi^{(a)}$ of~\eqref{phi0} and~\eqref{phia}, since the value of $a$ uniquely identif\/ies the shell polynomials~$S_n^{(a)}$
corresponding to~$\{P_n\}$, and hence, through~\eqref{recS} (where $\mu_0^{(a)} = 0$) and~\eqref{dual}, the birth and death rates.

The next theorem gives an explicit one-to-one relation between $\mu_0$ and~$a$,
and shows that the question of whether the alternative representation yields {\em all}
possibilities can be answered in the af\/f\/irmative, provided we allow $0 \leq a \leq \infty$
and interpret~$\lambda_n^{(\infty)}$ and $\mu_n^{(\infty)}$ as limits as $a\to\infty$ of the
corresponding quantities with superindex~$^{(a)}$. We will see in Section~\ref{chainsequences} that the theorem is an immediate corollary of a~theorem of Chihara~\cite{C62}.

\begin{Theorem}
\label{mu0a}
Let $\psi$ be a natural measure satisfying \eqref{m-1} and let $\mu_0$ be determined
by~$a$ via~\eqref{phi0}, \eqref{phia}, \eqref{recS} $($with $\mu_0^{(a)}=0)$ and~\eqref{dual}. Then, for $0 \leq a \leq \infty$,
\begin{gather*}
\mu_0 = \frac{1}{(a + 1)m_{-1}(\psi)},
\end{gather*}
whence $\mu_0$ can have any value in the interval $[0,1/m_{-1}(\psi)]$.
\end{Theorem}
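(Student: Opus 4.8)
The plan is to compute the moment $m_{-1}\bigl(\phi^{(a)}\bigr)$ directly and then relate it to the rate $\mu_0$ produced by the construction \eqref{recS}--\eqref{dual}, using the Karlin--McGregor characterization of the order-$(-1)$ moment already cited in the introduction. First I would observe that for $a>0$ the measure $\phi^{(a)}$ has an atom at $0$, namely $\phi^{(a)}(\{0\})=a/(a+1)$, so $m_{-1}\bigl(\phi^{(a)}\bigr)=\infty$; for $a=0$ we have, from \eqref{phi0}, $m_{-1}\bigl(\phi^{(0)}\bigr)=\frac{1}{m_{-1}(\psi)}\int_{[0,\infty)}y^{-1}\cdot y^{-1}\cdot y\,\psi(dy)$, which after cancelling is $\frac{1}{m_{-1}(\psi)}\int_{[0,\infty)}y^{-1}\psi(dy)=1$ only if $m_{-1}(\psi)$ is finite — wait, more carefully: $\psi([0,x])=(a{+}1)m_{-1}(\psi)\int_{[0,x]}y\,\phi^{(a)}(dy)$, so $\int y^{-1}\phi^{(a)}(dy)=(a{+}1)m_{-1}(\psi)\int y^{-1}\cdot y\cdot \frac{\phi^{(a)}(dy)}{\ldots}$; the clean route is to use this change-of-measure identity to write $m_{-1}\bigl(\phi^{(a)}\bigr)$ in terms of $m_{-2}$-type integrals against $\psi$, but that diverges. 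So the more robust approach is the second one below.

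The cleaner line of argument is to use the rate-set side. By \eqref{dual}, $\mu_0=\lambda_0^{(a)}$, the leading birth rate of the rate set attached to the shell polynomials $S_n^{(a)}$. By the Karlin--McGregor result quoted around \eqref{m-1} (applied now to $\phi^{(a)}$ and its polynomials $S_n^{(a)}$ rather than to $\psi$ and $P_n$), the quantity $1/\lambda_0^{(a)}$ — here $\lambda_0^{(a)}$ plays the role that $\mu_0$ plays for $\psi$ — equals $m_{-1}\bigl(\phi^{(a)}\bigr)$ when the latter is finite and the relevant rate is taken maximal; but since we have set $\mu_0^{(a)}=0$ by convention, the rate set for $S_n^{(a)}$ is forced, and one reads off $\lambda_0^{(a)}=\bigl(\int_{[0,\infty)}x^{-1}\phi^{(a)}(dx)\bigr)^{-1}$ directly from the standard formula $\lambda_0^{(a)}=1/m_{-1}(\phi^{(a)})$ valid precisely because $\mu_0^{(a)}=0$. (When $m_{-1}(\phi^{(a)})=\infty$, i.e.\ $a>0$, this reads $\lambda_0^{(a)}=0$, consistent with $\mu_0=0$ being the only admissible value once $\mu_0^{(a)}=0$ — but note the theorem claims $\mu_0=1/((a{+}1)m_{-1}(\psi))$, which is positive for finite $a$; so the convention $\mu_0^{(a)}=0$ must be compatible with $\lambda_0^{(a)}>0$, and indeed it is: $\lambda_0^{(a)}>0$ is never in question, only $\mu_0^{(a)}$.) Hence it remains to show
\begin{gather*}
m_{-1}\bigl(\phi^{(a)}\bigr)=(a+1)\,m_{-1}(\psi)\qquad\text{in the sense that }\ \lambda_0^{(a)}=\frac{1}{(a+1)m_{-1}(\psi)}.
\end{gather*}

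For this final identity I would not integrate $x^{-1}$ against $\phi^{(a)}$ (which diverges when $a>0$), but instead invoke the theorem of Chihara \cite{C62} referenced in the paragraph before the statement: the text explicitly says ``the theorem is an immediate corollary of a theorem of Chihara \cite{C62},'' and Section~\ref{chainsequences} (on chain sequences) is flagged as the place where this is made precise. So the actual proof I propose is: defer to that corollary. Concretely, Chihara's analysis of kernel polynomials and the associated chain sequences gives the value of the smallest parameter $\lambda_0^{(a)}$ in the recurrence \eqref{recS}--\eqref{dual} as an explicit function of the atom size $a$ and the fixed data $m_{-1}(\psi)$; extracting that value yields $\lambda_0^{(a)}=1/((a{+}1)m_{-1}(\psi))$, equivalently $\mu_0=1/((a{+}1)m_{-1}(\psi))$, and letting $a$ range over $[0,\infty]$ sweeps out $[0,1/m_{-1}(\psi)]$ as claimed. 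The main obstacle is thus entirely bookkeeping: matching the normalization in \eqref{phia} (the factor $1/(a{+}1)$) against the chain-sequence parametrization in \cite{C62}, and checking the two boundary cases $a=0$ (where $\phi^{(0)}$ is det(H) and $m_{-1}(\phi^{(0)})$ is genuinely finite) and $a=\infty$ (where $\phi^{(\infty)}=\delta_0+\cdots$ degenerates and $\mu_0\to 1/m_{-1}(\psi)$, the Karlin--McGregor upper bound).
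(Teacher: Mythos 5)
Your pivotal step is broken. After correctly reading off from \eqref{dual} that $\mu_0=\lambda_0^{(a)}$, you invoke a ``standard formula'' $\lambda_0^{(a)}=1/m_{-1}\big(\phi^{(a)}\big)$. No such formula exists: the Karlin--McGregor fact quoted around \eqref{m-1} concerns the admissible range of a \emph{death} rate $\mu_0$, and the identity actually available when $\mu_0^{(a)}=0$ is $m_{-1}\big(\phi^{(a)}\big)=\sum_n \big(\lambda_n^{(a)}\pi_n^{(a)}\big)^{-1}$ (the analogue of \eqref{pi} and the implication stated at the end of Section~\ref{similarbirthdeath}), which only yields $m_{-1}\big(\phi^{(a)}\big)\geq 1/\lambda_0^{(a)}$. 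Your own remark exposes the failure: for $a>0$ the atom at $0$ makes $m_{-1}\big(\phi^{(a)}\big)=\infty$, so your formula would force $\lambda_0^{(a)}=0$, contradicting the positive value $((a+1)m_{-1}(\psi))^{-1}$ the theorem asserts; the parenthetical attempt to reconcile this does not repair the argument, and your displayed target $m_{-1}\big(\phi^{(a)}\big)=(a+1)m_{-1}(\psi)$ is false for every $a>0$ (even at $a=0$ finiteness is not guaranteed, since $m_{-1}\big(\phi^{(0)}\big)=m_{-2}(\psi)/m_{-1}(\psi)$). The identity you need involves the \emph{first} moment, not the moment of order $-1$: integrating $S_1^{(a)}(x)=x-\lambda_0^{(a)}$ (recall $\mu_0^{(a)}=0$) against the probability measure $\phi^{(a)}$ and using orthogonality to constants gives $\lambda_0^{(a)}=m_1\big(\phi^{(a)}\big)$, while \eqref{phi0} and \eqref{phia} give $m_1\big(\phi^{(a)}\big)=\frac{1}{a+1}\int y\,\phi^{(0)}(dy)=\frac{\psi([0,\infty))}{(a+1)m_{-1}(\psi)}=\frac{1}{(a+1)m_{-1}(\psi)}$. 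This two-line computation (it is exactly the identity the paper uses in Section~\ref{results} for the converse, where $m_1(\tilde\phi)=\tilde\lambda_0$) proves the theorem directly, with $a=\infty$ covered by the limiting convention.

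Your fallback, ``defer to Chihara's corollary,'' is in substance the paper's own proof: Section~\ref{chainsequences} forms the chain sequence $a_n=d_{n+1}/(c_nc_{n+1})$, parametrizes it by $g_n=\mu_n/(\lambda_n+\mu_n)$ with maximal parameter $M_0=(c_1m_{-1}(\psi))^{-1}$, and then compares \eqref{phia} with equation~(3.3) of \cite{C62} to conclude $a=(\mu_0 m_{-1}(\psi))^{-1}-1$. But you do not carry out any of this identification; you only assert it is ``bookkeeping.'' So as written the proposal contains no proof: the self-contained route rests on a false identity, and the deferral route omits precisely the matching that constitutes the argument.
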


Note that, as $a \to \infty$, $\phi^{(a)}$ converges strongly to $\delta_0$,
so we cannot (and need not) extend the
def\/inition of~$S_n^{(a)}$ to include the case $a=\infty$. An interpretation of
$\lambda_n^{(\infty)}$ and $\mu_n^{(\infty)}$ as birth and death rates of a
birth-death process on the nonnegative integers is possible, but does not f\/it
in the setting described around~\eqref{rep} since
$\lambda_0^{(\infty)} = \mu_0^{(\infty)} = 0$.

Let us mention at this point that \eqref{dual} displays the duality concept
for birth-death processes that will be further discussed in Section~\ref{dualbirthdeath} and plays a crucial role in Section~\ref{results}.

\subsection{Chain sequences and rate sets}
\label{chainsequences}

We f\/irst recall some def\/initions and basic results (see Chihara \cite[Section~III.5]{C78}
and~\cite{C90} for more information). A sequence $\{a_n\}_{n=1}^{\infty}$ is a
{\it chain sequence} if there exists a second sequence $\{g_n\}_{n=0}^{\infty}$
such that
\begin{alignat*}{3}
& (i) \quad && 0 \leq g_0 < 1, \qquad 0 < g_n < 1, \qquad n \geq 1, &\\
& (ii) \quad && a_n = (1 - g_{n-1})g_n, \qquad n \geq 1.&
\end{alignat*}
The sequence $\{g_n\}$ is called a {\it parameter sequence} for $\{a_n\}$.
If both $\{g_n\}$ and $\{h_n\}$ are parameter sequences for $\{a_n\}$, then
\begin{gather*}
g_n < h_n, \qquad n \geq 0 \quad \Longleftrightarrow \quad g_0 < h_0.
\end{gather*}
Every chain sequence $\{a_n\}$ has a {\it minimal} parameter sequence,
uniquely determined by the condition $g_0 = 0$, and a~{\it maximal}
parameter sequence~$\{M_n\}$, characterized by the fact that $M_0 > g_0$
for any other parameter sequence~$\{g_n\}$. For every $x$, $0 \leq x \leq M_0,$
there is a unique parameter sequence~$\{g_n\}$ for $\{a_n\}$ such that $g_0 = x.$

Linking the parameters in the three-terms recurrence relation~\eqref{recP} to
birth and death rates is an alternative for the approach involving chain sequences
chosen by Chihara in, for instance, \cite{C62,C78}. Indeed, letting
\begin{gather*}
a_n = \frac{d_{n+1}}{c_n c_{n+1}}, \qquad n \geq 1,
\end{gather*}
we see that the sequence $\{a_n\}_{n=1}^\infty$ is a chain sequence, since
$a_n = (1-g_{n-1})g_n$ if we choose
\begin{gather} \label{repg}
g_n = \frac{\mu_n}{\lambda_n+\mu_n}, \qquad n \geq 0,
\end{gather}
for any set of birth rates $\lambda_n$ and death rates $\mu_n$ satisfying \eqref{rep}.
So~\eqref{repg} gives a one-to-one correspondence between a parameter sequence for
the chain sequence $\{a_n\}$ and a rate set satisfying~\eqref{rep}.
Since $0 \leq \mu_0 \leq 1/m_{-1}(\psi)$, we can also characterize the maximal parameter
sequence for $\{a_n\}$ by
\begin{gather*}
M_0 = \frac{1}{c_1m_{-1}(\psi)}.
\end{gather*}

Invoking \cite[Theorem~2]{C62} we can now conclude that it implies Theorem~\ref{mu0a},
for on comparing our~\eqref{phia} with \cite[equation~(3.3)]{C62} and making
appropriate identif\/ications, we f\/ind that $a = (\mu_0 m_{-1}(\psi))^{-1} - 1$, as required.

\subsection{Spectral properties and Nevanlinna extremal measures}
\label{spectral}

In this subsection we will introduce some notation and terminology concerning
the {\em natural} measure $\psi$ introduced in Section~\ref{introduction} and,
if~$\psi$ is indet(S), related measures called {\em Nevanlinna extremal}.

Of interest to us will be the quantities $\xi_i,$ recurrently def\/ined by
\begin{gather}
\label{xi1}
\xi_1 := \inf \operatorname{supp}(\psi),
\end{gather}
and
\begin{gather}
\label{xii}
\xi_{i+1} := \inf \{\operatorname{supp}(\psi)\cap (\xi_i,\infty)\}, \qquad i \geq 1.
\end{gather}
where supp($\psi$) denotes the support (or {\em spectrum}) of the measure~$\psi$.
We further def\/ine
\begin{gather}
\label{sigma}
\sigma := \lim_{i\to\infty} \xi_i,
\end{gather}
the f\/irst accumulation point of $\operatorname{supp}(\psi)$ if it exists, and inf\/inity otherwise.
So $(\operatorname{supp}\psi)$ is discrete with no f\/inite limit point if and only if $\sigma = \infty$.
It is clear from the def\/inition of~$\xi_i$ that, for all $i \geq 1$,
\begin{gather*}
\xi_{i+1} \geq \xi_i \geq 0,
\end{gather*}
and
\begin{gather*}
\xi_i = \xi_{i+1} \quad \Longleftrightarrow \quad \xi_i = \sigma.
\end{gather*}
Note that we must have $\sigma = 0$ if $\xi_1 = 0$ and $\psi(\{0\}) = 0$. Also, $\psi$
must be det(S) if $\xi_1 = 0$.

From Karlin and McGregor \cite{K57a} (see also Chihara~\cite{C82}) we know that
\begin{gather}
\label{detS}
\psi \quad \mbox{is~indet(S)} \quad \Longleftrightarrow \quad
\sum_{n=0}^\infty \left(\pi_n + \frac{1}{\lambda_n\pi_n}\right) < \infty,
\end{gather}
where
\begin{gather}
\label{pi}
\pi_0 := 1 \qquad \mbox{and} \qquad \pi_n := \frac{\lambda_0 \lambda_1 \cdots
\lambda_{n-1}} {\mu_1 \mu_2 \cdots \mu_n}, \qquad n\geq 1,
\end{gather}
and $\{\lambda_n,\mu_n\}$ is the rate set with $\mu_0 = 0$ satisfying~\eqref{rep}.
We note, parenthetically, that for a rate set with $\mu_0 > 0$ the right-hand side of
\eqref{detS} is suf\/f\/icient, but not necessary for the corresponding natural measure to be
indet(S) (see~\cite{C82}).

It is well known that $\sigma = \infty$ if $\psi$ is indet(S).
(A~{\em necessary} and suf\/f\/icient condition for $\sigma = \infty$ in terms of any rate set
satisfying~\eqref{rep} has recently been revealed in~\cite{D15a}.)
Moreover, if $\psi$ is indet(S) there are inf\/initely many solutions of the Stieltjes moment
problem associated with $\psi$. We shall be interested in particular in solutions known as
{\em Nevanlinna extremal} (or {\em $N$-extremal}, for short), which may be def\/ined
as follows (see, for example, Berg and Valent \cite[Section~1]{B95} and, for more
background information, Shohat and Tamarkin \cite[pp.~51--60]{S43} and Berg and
Valent \cite[Section~2]{B94}). Let
\begin{gather*}
\rho(x) := \left\{\sum_{n=0}^\infty p_n^2(x)\right\}^{-1}, \qquad x \in \mathbb{R},
\end{gather*}
where $p_n(x)$ are the {\em orthonormal} polynomials corresponding to~$\psi$.
Then~$\rho(x)$ is positive for all real~$x$ and equals, if $x \geq 0$, the maximal mass any
solution can concentrate at~$x$. Supposing that a solution
of the Stieltjes moment problem locates positive mass at the point~$x$, then that solution
is an $N$-extremal solution if and only if the point mass at~$x$ equals~$\rho(x)$.

Some pertinent properties of $N$-extremal solutions are the following. There is a one-to-one
correspondence between the real numbers in the interval $[0,\xi_1]$ and the $N$-extremal
solutions of the Stieltjes moment problem associated with $\psi$. For $\xi \in [0, \xi_1]$ we
denote the corresponding $N$-extremal solution by~$\psi_\xi$. The spectrum of $\psi_\xi$ is
discrete and consists of the point $\xi$ and exactly one point in each of the intervals
$(\xi_i,\xi_{i+1}]$, $i \geq 1$. Evidently, we have $\psi_{\xi_1} = \psi$ and
supp($\psi_{\xi_1}$) $= \{\xi_1,\xi_2,\dots\}$. The spectral points of two dif\/ferent
$N$-extremal solutions strictly separate each other.

We f\/inally note that the natural measure $\psi$ can be identif\/ied with the solution of the
associated Stieltjes moment problem that is related to the {\em Friedrichs extension} of
some semi-bounded operator (see Pedersen~\cite{P95} for details). The parametrization in~\cite{P95} (and in~\cite{B94,B95}) of the $N$-extremal solutions of
the indeterminate Stieltjes moment problem associated with $\psi$ dif\/fers from ours and is
ef\/fectuated by a~number in an interval $[\alpha, 0]$; the solution corresponding
to the parameter value~$\alpha$ ($<0$) is our natural solution~$\psi$ ($ = \psi_{\xi_1}$).

\section{Birth-death processes}
\label{birthdeath}

\subsection{Basic properties}
\label{basic}

In this paper a {\em birth-death process} $\mathcal{X} \equiv \{X(t),\, t \geq 0\}$,
say, will always be a continuous-time Markov chain taking values in
$\mathcal{N} := \{0,1,\ldots\}$ with the property that only transitions to
neighbouring states are permitted. The process has upward transition
(or {\em birth}) rates $\lambda_n$, $n \in \mathcal{N}$, and downward transition
(or {\em death}) rates, $\mu_n$, $n \in \mathcal{N}$, all strictly positive
except~$\mu_0$, which might be equal to~0. When $\mu_0 = 0$ the process is
irreducible, but when $\mu_0 > 0$ the process may escape from $\mathcal{N}$,
via $0$, to an absorbing state~$-1$. The $q$-matrix of transition rates
of $\mathcal{X}$, restricted to the states in~$\mathcal{N}$, will be denoted by~$Q$, that is,
\begin{gather} \label{Q}
Q = \begin{pmatrix}
-(\lambda_0 + \mu_0) & \lambda_0 & 0 & 0 & 0 & \ldots\cr
 \mu_1 & -(\lambda_1 + \mu_1) & \lambda_1 & 0 & 0 & \ldots\cr
 0 & \mu_2 & -(\lambda_2 + \mu_2) & \lambda_2 & 0 & \ldots\cr
 \ldots & \ldots & \ldots & \ldots & \ldots & \ldots\cr
 \ldots & \ldots & \ldots & \ldots & \ldots & \ldots\cr
\end{pmatrix},
\end{gather}
and $\mathcal{X}$ will be referred to as a $Q$-process. The process $\mathcal{X}$
will be identif\/ied with its transition functions
\begin{gather*}
p_{ij}(t) := \Pr\{X(t)=j\,|\,X(0)=i\}, \qquad i,j \in \mathcal{N}, \qquad t \geq 0,
\end{gather*}
and we write $P(\cdot) := (p_{ij}(\cdot)$, $i,j \in \mathcal{N})$. Besides the usual
probabilistic requirements and the {\em Chapman--Kolmogorov equations}
\begin{gather*}
P(s+t) = P(s)P(t), \qquad s \geq 0, \qquad t \geq 0,
\end{gather*}
imposed by the Markov property, the transition functions of $\mathcal{X}$ will
be assumed to satisfy both the {\em Kolmogorov backward equations}
\begin{gather*}
P^\prime (t) = QP(t), \qquad t \geq 0,
\end{gather*}
and {\em forward equations}
\begin{gather*}
P^\prime (t) = P(t)Q, \qquad t \geq 0,
\end{gather*}
with initial condition $P(0) = I$, the identity matrix.
It follows in particular that $P^\prime(0) = Q$, establishing Fact~\ref{fact1}.
We refer to Anderson~\cite{A91} for more information on continuous-time Markov
chains in general and birth-death processes in particular.

A matrix of the type~\eqref{Q} is always the $q$-matrix of a birth-death process,
but not necessarily of a unique process. Karlin and McGregor~\cite{K57a} have
shown that the $Q$-process $\mathcal{X}$ is uniquely determined by~$Q$~-- that is,
by its rates~-- if and only if the series
\begin{gather}
\label{unique}
\sum_{n=0}^\infty \left(\pi_n + \frac{1}{\lambda_n\pi_n}\right),
\end{gather}
where $\pi_n$ is given by \eqref{pi}, diverges. If $\mu_0=0$ then, in view
of \eqref{detS} (where $\mu_0=0$ is assumed), the series diverges if and only
if~$\psi$ is det(S), where $\psi$ denotes the (natural) measure def\/ined by
the rate set~$\{\lambda_n,\mu_n\}$.
If $\mu_0 > 0$ then, by \cite[Theorem~15]{K57a}, the series \eqref{unique}
diverges if and only if $\psi$ is det(S) {\em or} $m_{-1}(\psi) = 1/\mu_0$.

If the series \eqref{unique} converges there is an inf\/inite, one-parameter
family of $Q$-processes, which includes two members -- the {\em minimal}
and the {\em maximal} $Q$-process -- with matrices of transition functions
$P^{\min}(\cdot)$ and $P^{\max}(\cdot)$ that are uniquely def\/ined by the requirement
that any $Q$-process with matrix of transition functions $P(\cdot)$ satisf\/ies
\begin{gather*}
P^{\min}(t) \leq P(t) \leq P^{\max}(t), \qquad i,j \in \mathcal{N}, \qquad t \geq 0,
\end{gather*}
where $\leq$ denotes componentwise inequality. After introducing duality for
birth-death processes in the next subsection we will able to identify the parameter
characterizing the individual $Q$-processes.

Given the birth rates $\lambda_n$ and death rates $\mu_n$ of $\mathcal{X}$ we can
def\/ine positive numbers~$c_n$ and~$d_n$ by~\eqref{rep} and, subsequently,
polynomials~$P_n$ by the recurrence relation \eqref{recP}. By Favard's theo\-rem
the polynomials~$P_n$ are orthogonal with respect to a f\/inite positive Borel measure
on the real axis (with f\/inite moments of all positive orders), and it is shown in~\cite{K57a} and~\cite{C62} that, in fact, there is such a measure with
support on the {\em nonnegative} real axis. As before we will assume that
the measure is normalized to be a probability measure.
So we conclude that a set of birth and death rates uniquely def\/ines a
{\em natural} measure on the nonnegative real axis, thus conf\/irming the f\/irst
part of Fact~\ref{fact3}.

Actually, the natural measure that is def\/ined by the rates $\lambda_n$ and $\mu_n$~-- and hence by the matrix~$Q$~-- is precisely the measure $\psi$ appearing in
Karlin and McGregor's~\cite{K57a} spectral representation for the transition
functions of the unique (if the series \eqref{unique} diverges) or minimal (if
the series~\eqref{unique} converges) $Q$-process, namely,
\begin{gather}
\label{spectralrep}
p_{ij}(t) = (-1)^{i+j} \prod_{k=1}^j\frac{1}{\lambda_{k-1}\mu_k}
\int_0^\infty e^{-xt}P_i(x)P_j(x)\psi(dx),\qquad i,j \in \mathcal{N}, \qquad t \geq 0,
\end{gather}
where an empty product is def\/ined to be~$1$. If the series~\eqref{unique}
converges the representation~\eqref{spectralrep} still holds for {\em any}
$Q$-process, provided $\psi$ is replaced by the appropriate $N$-extremal
solution of the Stieltjes moment problem associated with the rate set.
If $\mu_0=0$ every $N$-extremal measure~$\psi_\xi$, $0 \leq \xi \leq \xi_1$,
corresponds to a birth-death process. The $N$-extremal measures corresponding
to a birth-death process with $\mu_0 > 0$ will be identif\/ied in the next
subsection. In any case, the preceding remarks conf\/irm Fact~\ref{fact2}.

For completeness' sake we recall from Section~\ref{introduction} that a natural
measure on the nonnegative real axis corresponds to an inf\/inite family of rate
sets, indexed by the value of~$\mu_0$, if (and only if) the measure has a f\/inite
moment of order~$-1$.

\subsection{Dual birth-death processes}
\label{dualbirthdeath}

Our point of departure in this subsection is a birth-death process $\mathcal{X}$
that is uniquely def\/ined by its birth rates $\lambda_n$ and death rates $\mu_n$,
where $\mu_0 > 0$. Following Karlin and McGregor \cite[Section 6]{K57b},
we def\/ine the process $\mathcal{X}^d$ to be a birth-death process on $\mathcal{N}$
with birth rates $\lambda_n^d$ and death rates $\mu_n^d$ given by $\mu_0^d := 0$ and
\begin{gather}
\label{dualrates}
\lambda_n^d := \mu_n,\qquad \mu_{n+1}^d := \lambda_n, \qquad n \geq 0.
\end{gather}
Accordingly, we let
\begin{gather*}
\pi_0^d := 1 \qquad \mbox{and}\qquad \pi_n^d :=
\frac{\lambda^d_0\lambda^d_1\cdots\lambda^d_{n-1}}{\mu^d_1\mu^d_2\cdots\mu^d_n}
= \frac{\mu_0\mu_1\cdots\mu_{n-1}}{\lambda_0\lambda_1\cdots\lambda_{n-1}}, \qquad n\geq 1,
\end{gather*}
and note that
\begin{gather*}
\pi_{n+1}^d = \mu_0(\lambda_n\pi_n)^{-1} \qquad \mbox{and}\qquad
\big(\lambda^d_n\pi^d_n\big)^{-1} = \mu_0^{-1}\pi_n, \qquad n\geq 0.
\end{gather*}
Hence divergence of the series~\eqref{unique} is equivalent to divergence
of the series
\begin{gather}
\label{dualunique}
\sum_{n=0}^\infty \left( \pi^d_n + \frac{1}{\lambda^d_n\pi^d_n}\right),
\end{gather}
so that $\mathcal{X}^d$ is uniquely def\/ined by its rates if and
only if $\mathcal{X}$ is uniquely def\/ined by its rates.
So within the setting of birth-death processes that are uniquely def\/ined by their
rates, the mapping~\eqref{dualrates} establishes a one-to-one correspondence
between processes with $\mu_0 = 0$ and those with $\mu_0 > 0$. The processes~$\mathcal{X}$ and $\mathcal{X}^d$ are therefore called each other's {\em dual}.

The transition functions of $\mathcal{X}^d$ satisfy a representation
formula analogous to~\eqref{spectralrep}, involving birth-death polynomials~$P_n^d$ and a unique natural probability measure~$\psi^d$ on the nonnegative real axis with respect to which the polynomials~$P_n^d$ are orthogonal.
Still assuming divergence of~\eqref{unique} (and hence of~\eqref{dualunique}),
we have, by \cite[Lemma~3]{K57a},
\begin{gather}
\label{dualpsi}
\psi^d([0,x]) = 1 - \mu_0 m_{-1}(\psi) + \mu_0\int_{[0,x]} y^{-1}\psi(dy), \qquad x \geq 0,
\end{gather}
where $\psi$ is the (natural) measure def\/ined by $\mathcal{X}$ (which must have
$m_{-1}(\psi)<\infty$ since $\mu_0>0$).
With $\xi^d_i$ and~$\sigma^d$ denoting the quantities
def\/ined by~\eqref{xi1},~\eqref{xii} and~\eqref{sigma} if we replace~$\psi$ by~$\psi^d$, we thus have $\sigma^d = \sigma$,
\begin{gather*}
\xi^d_1 = 0 \qquad \mbox{and} \qquad \xi^d_{i+1} = \xi_i,~i>1, \qquad \mbox{if}\qquad \mu_0 m_{-1}(\psi)<1,
\end{gather*}
and
\begin{gather*}
\xi^d_i = \xi_i, \qquad i\geq 1, \qquad \mbox{if} \qquad \mu_0 m_{-1}(\psi)=1.
\end{gather*}
Interestingly, with $p_{ij}^d(t)$ denoting the transition functions of the dual
process, we also have
\begin{gather}\label{dualp}
\sum_{j\geq k} p_{ij}^d(t) = \sum_{j<i} p_{k-1,j}(t), \qquad i, k \in \mathcal{N},\qquad t \geq 0.
\end{gather}
provided the summations are interpreted to include probability mass, if any, having
escaped from~$\mathcal{N}$ to the absorbing state~$-1$ or to inf\/inity (see~\cite{D87}
and the references there for details). This property makes duality a useful
tool in the analysis of birth-death processes (see, for example,~\cite{D15b}).

If the series~\eqref{unique} (and hence the series \eqref{dualunique}) converges,
the situation is more complicated since the rate sets $\{\lambda_n,\mu_n\}$ and
$\{\lambda_n^d,\mu_n^d\}$ are associated with inf\/inite families of birth-death
processes. The following facts have been established in~\cite{D87}. First,
there is the separation result
\begin{gather*}
0 < \xi_i^d < \xi_i < \xi_{i+1}^d, \qquad i \geq 1,
\end{gather*}
where $\xi_i$ and $\xi^d_i$ now represent the spectral points of the {\em natural}
measures $\psi = \psi_{\xi_1}$ and $\psi^d = \psi^d_{\xi_1^d}$ that are uniquely
def\/ined by the rate sets $\{\lambda_n,\mu_n\}$ and $\{\lambda_n^d,\mu_n^d\}$, respectively.
(Recall that $\sigma = \sigma^d = \infty$.)

Secondly, the $N$-extremal measure $\psi_\xi$ is associated with a birth-death process
(in the sense of Section~\ref{basic}) if and only if
$\xi_1^d \leq \xi \leq \xi_1$, while the $N$-extremal solution $\psi_\xi^d$ is associated
with a birth-death process for all $\xi$ satisfying $0 \leq \xi \leq \xi_1^d$.
The birth-death processes associated with the $N$-extremal solutions $\psi_{\xi_1} = \psi$
and $\psi_{\xi_1^d}^d = \psi^d$ are {\em minimal} processes, whereas the birth-death
processes corresponding to the $N$-extremal solutions~$\psi_{\xi_1^d}$ and~$\psi_0^d$ are {\em maximal} processes.

Thirdly, \eqref{dualpsi} (and~\eqref{dualp}) remain valid if (and only if) either~$\psi^d$ is replaced by~$\psi^d_0$ or~$\psi$ by~$\psi_{\xi_1^d}$, that is, we have
\begin{gather}
\label{dualpsi1}
\psi^d_0([0,x]) = 1 - \mu_0 m_{-1}(\psi) + \mu_0\int_{[0,x]} y^{-1}\psi(dy), \qquad x \geq 0,
\end{gather}
and
\begin{gather*}
\psi^d([0,x]) = 1 - \mu_0 m_{-1}(\psi_{\xi_1^d}) + 	\mu_0\int_{[0,x]} y^{-1}\psi_{\xi_1^d}(dy), \qquad x \geq 0.
\end{gather*}
It follows that the duality concept for rate sets can be extended to birth-death
processes also if they are not uniquely def\/ined by their rates, provided one
restricts oneself to minimal and maximal processes, and links a minimal process to a maximal process.

We f\/inally remark that a probabilistic interpretation of minimal and maximal
processes involves the character of the boundary at inf\/inity (which is not specif\/ied
by the rates). This boundary may be completely absorbing (the minimal process),
completely ref\/lecting (the maximal process), or something in between. Evidently,
the distinction is relevant only if the process can {\em explode}, that is, reach inf\/inity in f\/inite time.

\subsection{Similar birth-death processes}
\label{similarbirthdeath}

Consider, besides the birth-death process $\mathcal{X}$ of Section~\ref{basic},
another birth-death process~$\tilde{\mathcal{X}}$, with birth rates $\tilde{\lambda}_n$
and death rates~$\tilde{\mu}_n$, coef\/f\/icients $\tilde{\pi}_n$ and transition functions~$\tilde{p}_{ij}(\cdot)$.
The processes~$\mathcal{X}$ and $\tilde{\mathcal{X}}$ are said to be
{\em similar} if there are constants $c_{ij}$, $i,j \in \mathcal{N},$ such that
\begin{gather*}
{\tilde p}_{ij}(t) = c_{ij}p_{ij}(t), \qquad i,j \in \mathcal{N}, \qquad t \geq 0.
\end{gather*}
The next theorem shows that, under certain regularity conditions, similarity imposes
strong restrictions on the birth and death rates.

\begin{Theorem}\label{similar-theorem}
Let the birth-death processes $\mathcal{X}$ and $\tilde{\mathcal{X}}$ be either
uniquely determined by their rates or minimal. If $\mathcal{X}$ and $\tilde{\mathcal{X}}$
are similar, then their birth and death rates are related as
\begin{gather}\label{rates}
\tilde{\lambda}_n + \tilde{\mu}_n = \lambda_n + \mu_n,\qquad \tilde{\lambda}_n\tilde{\mu}_{n+1} = \lambda_n \mu_{n+1}, \qquad n \in \mathcal{N},
\end{gather}
while their transition functions satisfy
\begin{gather*}
\tilde{p}_{ij}(t) = \sqrt{\frac{\pi_i \tilde{\pi}_j}{\tilde{\pi}_i \pi_j}} p_{ij}(t), \qquad i,j \in \mathcal{N}, \qquad t \geq 0.
\end{gather*}
Conversely, if $\mathcal{X}$ and $\tilde{\mathcal{X}}$ are birth-death processes
with rates related as in~\eqref{rates} then~$\mathcal{X}$ and~$\tilde{\mathcal{X}}$ are similar.
\end{Theorem}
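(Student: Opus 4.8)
The plan is to observe that \eqref{rates} says precisely that the rate sets of $\mathcal X$ and $\tilde{\mathcal X}$ define the \emph{same} monic orthogonal polynomials, and to convert this into the similarity relation through the symmetrised backward equation (equivalently, through \eqref{spectralrep}). Concretely, since $c_{n+1}=\lambda_n+\mu_n$ and $d_{n+2}=\lambda_n\mu_{n+1}$ are the coefficients of the recurrence \eqref{recP}, condition \eqref{rates} holds if and only if $\tilde c_n=c_n$ $(n\geq1)$ and $\tilde d_n=d_n$ $(n\geq2)$, i.e.\ if and only if $\tilde P_n=P_n$ for all $n$; and since a measure with infinite support is determined by its monic orthogonal polynomials, this in turn is equivalent to equality of the natural measures, $\tilde\psi=\psi$. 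Equivalently, put $\Pi:=\operatorname{diag}(\pi_0,\pi_1,\dots)$ with $\pi_n$ as in \eqref{pi}, let $\widetilde\Pi$ be its analogue for $\tilde{\mathcal X}$, and form the symmetrised $q$-matrix $\widehat Q:=\Pi^{1/2}Q\Pi^{-1/2}$; a short computation shows $\widehat Q$ is the Jacobi matrix with diagonal entries $-(\lambda_n+\mu_n)$ and off-diagonal entries $\sqrt{\lambda_n\mu_{n+1}}$, so \eqref{rates} is equivalent to $\widehat Q=\widetilde\Pi^{1/2}\widetilde Q\,\widetilde\Pi^{-1/2}$.

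\emph{From \eqref{rates} to similarity.} Assume \eqref{rates}, so the symmetrised $q$-matrices coincide. Under the standing hypothesis the process is in each case the minimal $Q$-process, and the symmetrised transition matrix $U(t):=\Pi^{1/2}P(t)\Pi^{-1/2}$ is the minimal solution of $U'=\widehat Q\,U$, $U(0)=I$, hence is determined by $\widehat Q$ alone. Therefore $\Pi^{1/2}P(t)\Pi^{-1/2}=\widetilde\Pi^{1/2}\widetilde P(t)\widetilde\Pi^{-1/2}$, which rearranges entrywise to $\tilde p_{ij}(t)=\sqrt{\pi_i\tilde\pi_j/(\tilde\pi_i\pi_j)}\;p_{ij}(t)$. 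The factor is independent of $t$, so $\mathcal X$ and $\tilde{\mathcal X}$ are similar, and this is also the claimed transition-function identity.

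\emph{From similarity to \eqref{rates}.} Suppose $\tilde p_{ij}(t)=c_{ij}p_{ij}(t)$ for all $i,j\in\mathcal N$ and $t\geq0$. Putting $t=0$ gives $c_{ii}=1$, so $\tilde p_{00}=p_{00}$. Because each of $\mathcal X,\tilde{\mathcal X}$ is uniquely determined by its rates or minimal, \eqref{spectralrep} at $i=j=0$ yields $p_{00}(t)=\int_0^\infty e^{-xt}\psi(dx)$ and $\tilde p_{00}(t)=\int_0^\infty e^{-xt}\tilde\psi(dx)$; injectivity of the Laplace transform on finite Borel measures then forces $\tilde\psi=\psi$, which by the first paragraph is \eqref{rates}, and the transition-function formula follows as above.

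\emph{Main obstacle.} The analytic inputs — uniqueness of the Laplace transform and the fact that the minimal solution of the symmetrised backward equation depends only on $\widehat Q$ — are routine. The delicate point is the converse when the series \eqref{unique} converges for the common rate structure, so that the rate set does not determine the process uniquely: the symmetrisation argument then applies only when one pairs a minimal process with a minimal process (as the standing hypothesis permits), since a minimal and a maximal member of one and the same one-parameter family are not similar. Making this pairing convention explicit — in line with the minimal-to-maximal linkage of Section~\ref{dualbirthdeath} — rather than any estimate, is where the care is needed.
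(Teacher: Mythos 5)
Your argument is correct, but it is not the route the paper takes, so a comparison is in order. The paper proves nothing from scratch: for the necessity of \eqref{rates} it cites Lenin et al.~\cite{L00} (observing that the proof there never uses unique determination by the rates), and for sufficiency in the minimal case it simply notes that \eqref{rates} forces identical polynomials $P_n$ and identical natural measure, so the representation \eqref{spectralrep} gives $\tilde p_{ij}(t)=\mathrm{const}\cdot p_{ij}(t)$ with the stated constant. You instead give a self-contained proof in both directions: necessity via $c_{00}=1$ at $t=0$, the $(0,0)$-entry of \eqref{spectralrep}, and injectivity of the Laplace transform of finite measures on $[0,\infty)$ (yielding $\tilde\psi=\psi$, hence equal recurrence coefficients, hence \eqref{rates}); sufficiency via the observation that \eqref{rates} is exactly equality of the symmetrised Jacobi matrices $\Pi^{1/2}Q\Pi^{-1/2}$, combined with minimality of the Feller solution. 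What your route buys is independence from~\cite{L00} and a transparent structural reading of \eqref{rates}; what the paper's route buys is brevity and the explicit constant falling straight out of \eqref{spectralrep}. Two caveats you should tighten. First, your claim that ``a measure with infinite support is determined by its monic orthogonal polynomials'' is false when the moment problem is indeterminate (the polynomials determine only the moment sequence); it does no harm here because you only ever use the direction measure $\Rightarrow$ polynomials, and because $\psi$ and $\tilde\psi$ are both the natural (minimal-process) representing measures of the same moment problem, but the wording should reflect that. Second, the step ``the symmetrised transition matrix is the minimal solution of $U'=\widehat Q\,U$, hence determined by $\widehat Q$ alone'' is not a definition-chase: you need the Feller comparison theorem that the minimal $Q$-function is minimal among \emph{all} nonnegative solutions of the backward equations with identity initial condition (see Anderson~\cite{A91}), plus the remark that conjugation by a positive diagonal matrix preserves nonnegativity, the ODE, and the entrywise order; with that citation the step is sound. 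Your closing worry about pairing minimal with maximal members is moot, since the hypothesis of the theorem already restricts both processes to be uniquely determined or minimal.
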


In the more restricted setting in which $\mathcal{X}$ and $\tilde{\mathcal{X}}$
are uniquely determined by their rates the statements
of this theorem were given in \cite[Theorems~1 and~2]{L00}. Since the additional
restrictions are not used in the proof of the {\em necessity} of~\eqref{rates} for
similarity of $\mathcal{X}$ and $\tilde{\mathcal{X}}$, the question remains
whether \eqref{rates} is {\em sufficient} for similarity of~$\mathcal{X}$
and~$\tilde{\mathcal{X}}$ when the processes are not uniquely def\/ined by their rates (but minimal).
This, however, follows immediately from Karlin and McGregor's representation formula~\eqref{spectralrep},
since, considering the remarks preceding~\eqref{spectralrep}, the
polynomials and natural measure associated with $\mathcal{X}$ must be identical to
those of~$\tilde{\mathcal{X}}$ if~\eqref{rates} prevails.
Interestingly, Fralix~\cite{F15} recently established a suf\/f\/icient condition for
similarity in the setting of continuous-time {\em Markov chains} (conjectured
earlier by Pollett~\cite{P01}), which amounts to~\eqref{rates} when applied to birth-death processes.

On relating the results of Theorem~\ref{similar-theorem} to \eqref{recP} and \eqref{rep}
we see that a family of similar birth-death processes is characterized by the
fact that all members are associated with the same orthogonal polynomial
sequence~$\{P_n\}$~-- and hence with the same (natural) orthogonalizing measure~$\psi$~--
while each individual member may be characterized by the value of
$\mu_0$, which can be any number in $[0,1/m_{-1}(\psi)]$. So a family of
similar birth-death processes has either one member (if $m_{-1}(\psi) = \infty$) or
inf\/initely many members (if $m_{-1}(\psi) < \infty$). Note that there is always
a~member in the family with $\mu_0 = 0$, the {\em representative} of the family.
By \cite[equation~(2.4), Lemma~6 on p.~527]{K57a} we have
\begin{gather*}
\mu_0 = 0 \quad \Longrightarrow \quad m_{-1}(\psi) = \sum_{n=0}^\infty \frac{1}{\lambda_n\pi_n},
\end{gather*}
so to decide whether the representative of a family is the only member of the family
is, given the birth and death rates of the representative, a trivial task.

\section{Results}\label{results}

Having collected all we need, we are ready to draw conclusions.
To start with, consider a rate set $\{\lambda_n,\mu_n\}$ with $\mu_0 > 0$ and the natural
measure $\psi$ that, by Fact~\ref{fact3}, is def\/ined by this set.
Since, by Fact~\ref{fact3} again, $m_{-1}(\psi) < \infty$ and $\mu_0 \leq 1/m_{-1}(\psi)$, we
can choose $a = (\mu_0 m_{-1}(\psi))^{-1} - 1 \geq 0$ and thus link the rate set
$\{\lambda_n,\mu_n\}$ to the measure $\phi^{(a)}$ def\/ined in~\eqref{phi0} and~\eqref{phia}.

Let us f\/irst assume that the rate set $\{\lambda_n,\mu_n\}$ is such that the series \eqref{unique}
diverges, whence it corresponds to a unique birth-death process $\mathcal{X}$. Then,
by Fact~\ref{fact4}, there are two possibilities. The f\/irst is that~$\psi$ is det(S),
in which case, by Theorem~\ref{det}, $\phi^{(a)}$ is also det(S). The second
possibility is that $\psi$ is indet(S) and $m_{-1}(\psi) = 1/\mu_0$. But then $a = 0$,
so that, by Theorem \ref{det}, $\phi^{(a)}=\phi^{(0)}$ is det(S) again. So, in
any case, $\phi^{(a)}$ is det(S) and hence natural. If $a = 0$ it is possible for
$\phi^{(a)}$ to def\/ine an inf\/inite family of rate sets, but, as agreed upon in
Section~\ref{shellpolynomials}, we will always associate with $\phi^{(a)}$ the
unique rate set $\{\lambda_n^{(a)},\mu_n^{(a)}\}$ with $\mu_0^{(a)} = 0$ determined by
the parameters in the recurrence relation~\eqref{recS} for the shell polynomials~$S_n^{(a)}$.
So we have now linked the rate set $\{\lambda_n,\mu_n\}$ with $\mu_0 > 0$ to a rate set $\{\lambda_n^{(a)},\mu_n^{(a)}\}$ with
$\mu_0^{(a)} = 0$, which, by Fact~\ref{fact4}, uniquely def\/ines a birth-death process
$\mathcal{X}^{(a)}$. But on comparing~\eqref{dual} and \eqref{dualrates}, we see that
\begin{gather*}
\lambda_n^{(a)} = \lambda_n^d \qquad \mbox{and}\qquad \mu_n^{(a)} = \mu_n^d, \qquad n \geq 0,
\end{gather*}
so that, actually, $\mathcal{X}^{(a)} = \mathcal{X}^d$, the dual process of $\mathcal{X}$.
Indeed, on comparing~\eqref{phi0} and~\eqref{phia} with~\eqref{dualpsi}, we also observe that
$\phi^{(a)} = \psi^d$, as required. We summarize our f\/indings in the next theorem.
\begin{Theorem} \label{mu0>0}
Let $\{\lambda_n,\mu_n\}$ with $\mu_0 > 0$ be a rate set for which the series~\eqref{unique} diverges, $\mathcal{X}$~the birth-death process defined by this set, and~$\psi$ the corresponding measure. Then $\phi^{(a)}$, defined by~\eqref{phi0}
and~\eqref{phia}, is ${\rm det(S)}$ for all $a\geq 0$, while, for
$a = (\mu_0 m_{-1}(\psi))^{-1}-1$, it is the measure corresponding to $\mathcal{X}^d$,
the dual process of~$\mathcal{X}$.
\end{Theorem}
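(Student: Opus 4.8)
The plan is to synthesize Fact~\ref{fact4}, Theorem~\ref{det} and Theorem~\ref{mu0a} with a comparison of the two ``dual'' relations \eqref{dual} and \eqref{dualrates}; indeed the statement is little more than a bookkeeping consequence of results already in place.

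First I would settle the det(S) assertion. Since \eqref{unique} diverges while $\mu_0>0$, Fact~\ref{fact4} leaves exactly two possibilities: either $\psi$ is det(S), or $\psi$ is indet(S) and $m_{-1}(\psi)=1/\mu_0$. In the former case Theorem~\ref{det}(i) shows that $\phi^{(0)}$ is det(H), hence det(S), while Theorem~\ref{det}(ii) shows that $\phi^{(a)}$ is det(S) for every $a>0$; thus $\phi^{(a)}$ is det(S) for all $a\geq 0$. In the latter case Theorem~\ref{mu0a} forces $a=(\mu_0 m_{-1}(\psi))^{-1}-1=0$, and $\phi^{(0)}$ is again det(S) by Theorem~\ref{det}(i). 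In either case $\phi^{(a)}$ is det(S), hence natural.

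Next I would identify the measure for $a=(\mu_0 m_{-1}(\psi))^{-1}-1$, which by Theorem~\ref{mu0a} is precisely the value of $a$ that reproduces $\mu_0$ through \eqref{phi0}, \eqref{phia}, \eqref{recS} (with $\mu_0^{(a)}=0$) and \eqref{dual}. The recurrence \eqref{recS} attaches to the shell polynomials $S_n^{(a)}$ a rate set $\{\lambda_n^{(a)},\mu_n^{(a)}\}$ with $\mu_0^{(a)}=0$, and \eqref{dual} reads $\lambda_n=\mu_{n+1}^{(a)}$, $\mu_n=\lambda_n^{(a)}$; on the other hand \eqref{dualrates} defines $\lambda_n^d=\mu_n$, $\mu_{n+1}^d=\lambda_n$, $\mu_0^d=0$. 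Reading one against the other gives $\lambda_n^{(a)}=\lambda_n^d$ and $\mu_n^{(a)}=\mu_n^d$ for all $n\geq 0$. Since divergence of \eqref{unique} is equivalent to divergence of \eqref{dualunique}, this common rate set uniquely defines a birth-death process, which on the $\{\lambda_n^d,\mu_n^d\}$ side is $\mathcal{X}^d$; and as $\phi^{(a)}$ is det(S) and orthogonalizes $S_n^{(a)}$, it must be the natural measure appearing in the representation \eqref{spectralrep} of that process, i.e.\ $\phi^{(a)}=\psi^d$. Alternatively and more explicitly, substituting $a+1=(\mu_0 m_{-1}(\psi))^{-1}$ into \eqref{phia} and invoking \eqref{phi0} gives, for $x\geq 0$,
\[
\phi^{(a)}([0,x]) = 1-\mu_0 m_{-1}(\psi) + \mu_0\int_{[0,x]}y^{-1}\psi(dy),
\]
which is exactly the right-hand side of \eqref{dualpsi} (valid here because \eqref{unique} diverges and $m_{-1}(\psi)<\infty$, the latter since $\mu_0>0$).

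I do not expect a genuine obstacle. The one point that needs care is keeping the two layers of ``dual'' indexing apart -- \eqref{dual} linking $\{\lambda_n,\mu_n\}$ to the shell parameters $\{\lambda_n^{(a)},\mu_n^{(a)}\}$, and \eqref{dualrates} linking $\{\lambda_n,\mu_n\}$ to $\{\lambda_n^d,\mu_n^d\}$ -- so that one concludes correctly that the process defined by the shell rates equals $\mathcal{X}^d$ rather than merely resembling it.
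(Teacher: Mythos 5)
Your proposal is correct and follows essentially the same route as the paper: the det(S) claim via Fact~\ref{fact4} and Theorem~\ref{det} (with the observation that the second alternative forces $a=0$), and the identification of $\phi^{(a)}$ with $\psi^d$ by matching the shell-polynomial rate set \eqref{dual} (with $\mu_0^{(a)}=0$) against \eqref{dualrates} and comparing \eqref{phi0}--\eqref{phia} with \eqref{dualpsi}. The explicit substitution $a+1=(\mu_0 m_{-1}(\psi))^{-1}$ you add is exactly the computation implicit in the paper's comparison with \eqref{dualpsi}, so nothing further is needed.
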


Note that the (not so obvious) condition $\mu_0m_{-1}(\psi) \leq 1$, which a rate
set associated with the natural measure~$\psi$ with $m_{-1}(\psi)<\infty$ should
satisfy, has a very natural counterpart for the measure of the dual process, namely $a \geq 0$.

Evidently, \eqref{dualrates} establishes a one-to-one correspondence between rate sets
with $\mu_0 = 0$ and those with $\mu_0 > 0$. So, as long as we work in the setting of
rate sets that uniquely def\/ine a~birth-death process, the above procedure mapping the
rate set $\{\lambda_n,\mu_n\}$ with $\mu_0 > 0$ to the rate set
$\{\lambda_n^{(a)},\mu_n^{(a)}\}$ with $\mu_0^{(a)} = 0$, via the corresponding
birth-death processes, must ref\/lect this correspondence.
In other words, every measure $\phi$ that corresponds to a rate set with $\mu_0 = 0$,
must be of the form~$\phi^{(a)}$ of~\eqref{phi0} and~\eqref{phia} for some $a \geq 0$,
with~$\psi$ being the measure of the dual process. Here are the details of this
correspondence.

Consider a rate set $\{\tilde{\lambda}_n,\tilde{\mu}_n\}$ with $\tilde{\mu}_0 = 0$ for which the analogue
of the series \eqref{unique} diverges. Let $\tilde{\mathcal{X}}$ be the birth-death
process uniquely def\/ined by this rate set, $\tilde{P}_n$ the corresponding polynomials
and $\tilde{\phi}$ the corresponding measure, which, in view of the analogue of~\eqref{detS}, must be det(S). Then, letting
\begin{gather*}
a = \frac{\tilde{\phi}(\{0\})}{1 - \tilde{\phi}(\{0\})} \qquad \mbox{and} \qquad
\phi^{(0)} = \frac{\tilde{\phi} - \tilde{\phi}(\{0\})\delta_0}{1 - \tilde{\phi}(\{0\})},
\end{gather*}
$\tilde{\phi}$ can be represented as
\begin{gather*}
\tilde{\phi} = \frac{1}{a+1}\big(a \delta_0 + \phi^{(0)}\big).
\end{gather*}
Def\/ining the (probability) measure $\psi$ by
\begin{gather}\label{psi}
\psi([0,x]) = \frac{1}{m_1(\phi^{(0)})}\int_{[0,x]} y\phi^{(0)}(dy), \qquad x \geq 0,
\end{gather}
we can apply some results of Berg and Thill~\cite{B91b, B91a} to conclude the following.

\begin{Lemma}
The measure $\psi$ defined by \eqref{psi} is the natural solution of the corresponding moment problem.
\end{Lemma}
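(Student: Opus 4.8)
The plan is to show that the operation of going from $\tilde\phi$ to $\psi$ via \eqref{psi} is exactly the inverse (at the level of measures) of the operation $\psi \mapsto \phi^{(0)} \mapsto \phi^{(a)}$ described in Section~\ref{introduction}, composed with the duality correspondence of Section~\ref{dualbirthdeath}. The starting datum is a rate set $\{\tilde\lambda_n,\tilde\mu_n\}$ with $\tilde\mu_0=0$ uniquely defining a birth-death process $\tilde{\mathcal X}$, whose natural measure $\tilde\phi$ is det(S). First I would record that $\tilde\phi(\{0\})\in[0,1)$ (the mass at $0$ is strictly less than $1$ since $\tilde\phi$ has infinite support), so that $a\geq 0$ is well defined and $\phi^{(0)}$ is a genuine probability measure on $[0,\infty)$. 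Then I would invoke the reversal identity \eqref{dualrates}: the rate set dual to $\{\tilde\lambda_n,\tilde\mu_n\}$, namely $\lambda_n := \tilde\mu_n$, $\mu_{n+1} := \tilde\lambda_n$, $\mu_0 > 0$ (to be pinned down below), also has divergent series \eqref{unique}, hence uniquely defines a birth-death process $\mathcal X$ with a natural measure; call it~$\psi'$. The content of \cite[Lemma~3]{K57a}, as recorded in \eqref{dualpsi}, read in the reverse direction, is precisely that $\psi'$ and $\tilde\phi$ are related by $\tilde\phi([0,x]) = 1-\mu_0 m_{-1}(\psi') + \mu_0\int_{[0,x]} y^{-1}\psi'(dy)$; differentiating the normalization and comparing atoms at $0$ gives $\tilde\phi(\{0\}) = 1-\mu_0 m_{-1}(\psi')$ and $\mu_0\, m_{-1}(\psi') \phi^{(0)} = \mu_0 \int y^{-1}\psi'(dy)$, i.e.\ $\phi^{(0)}$ here coincides with the measure $\phi^{(0)}$ built from $\psi'$ in \eqref{phi0}. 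It then follows that $\psi'([0,x]) = c\int_{[0,x]} y\,\phi^{(0)}(dy)$ for the normalizing constant $c = 1/m_1(\phi^{(0)}) = (1-\tilde\phi(\{0\}))/\mu_0$, which is exactly \eqref{psi}. Hence the measure $\psi$ defined by \eqref{psi} equals $\psi'$, the natural measure of the (uniquely defined) dual process $\mathcal X$, and therefore is the natural solution of its moment problem.

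The one place where I would have to be careful, and where the cited results of Berg and Thill \cite{B91b,B91a} enter, is the passage in the previous paragraph that quietly assumes the dual rate set actually yields a measure of the stated form with the right value of $m_{-1}$. A cleaner, self-contained route that avoids re-deriving duality is this: the map $\phi^{(0)} \mapsto \psi$ in \eqref{psi} is the standard passage from a det(H) measure $\phi^{(0)}$ with $m_1(\phi^{(0)})<\infty$ to the measure $\psi$ with respect to which the kernel polynomials (with $K$-parameter $0$) of $\{S_n\}$ are orthogonal — that is, $\psi$ is obtained from $\phi^{(0)}$ by the multiplication-by-$x$ transform, the exact inverse of the transform \eqref{phi0} that produces $\phi^{(0)}$ from a measure with finite $m_{-1}$. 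Berg and Thill show that the $x$-multiplication transform of a det(S) measure is again det(S), and more to the point that it produces the \emph{natural} (Friedrichs) solution: the point of \cite{B91b,B91a} is precisely the behaviour of the Friedrichs/natural solution under the index-shift transforms $\psi \leftrightarrow \phi^{(0)}$. Since $\tilde\phi$ is det(S) (by the analogue of \eqref{detS}, using that the analogue of \eqref{unique} diverges) and $\phi^{(0)} = (\tilde\phi - \tilde\phi(\{0\})\delta_0)/(1-\tilde\phi(\{0\}))$ is a restriction-and-renormalization of $\tilde\phi$ away from the isolated atom at $0$, $\phi^{(0)}$ inherits det(S)-ness; feeding this into the Berg--Thill result gives that $\psi$ in \eqref{psi} is det(S) and is the natural solution.

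**The main obstacle** I anticipate is the det(S) bookkeeping across the index shifts, specifically verifying that stripping the atom at $0$ from $\tilde\phi$ cannot turn a determined Stieltjes moment problem into an indeterminate one, and — conversely — that one has not lost control of the moment $m_{-1}$ when passing back. The first point is not entirely automatic because $\phi^{(0)}$ differs from $\tilde\phi$ by a point mass at the left endpoint of the support, which is exactly the kind of modification that changes the index in the chain-sequence picture of Section~\ref{chainsequences}; one has to check that this is the benign direction of the index shift (adding back an atom at $0$, not removing an interior one), and here the hypothesis $\tilde\mu_0 = 0$ is what guarantees $0$ is not in the support of $\phi^{(0)}$, so that \eqref{phi0} applied to $\psi$ genuinely reproduces $\phi^{(0)}$. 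Once that is pinned down — and this is precisely where \cite{B91b,B91a} do the heavy lifting about Friedrichs solutions under these transforms — the identification $\psi = \psi'$ and the conclusion that $\psi$ is natural are immediate. I would therefore organize the proof as: (1) well-definedness of $a$ and $\phi^{(0)}$; (2) det(S) of $\phi^{(0)}$ from det(S) of $\tilde\phi$ via the analogue of \eqref{detS}; (3) invoke Berg--Thill to conclude \eqref{psi} yields the natural (Friedrichs) solution; optionally (4) remark that this $\psi$ is the natural measure of the dual process $\mathcal X^d$ of $\tilde{\mathcal X}$, tying back to \eqref{dualrates} and \eqref{dualpsi}, which is what the surrounding discussion is really after.
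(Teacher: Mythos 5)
The route you actually organize your proof around (the ``cleaner, self-contained'' second paragraph, steps (2)--(3) of your plan) has a genuine gap: it rests on the claim that the multiplication-by-$x$ transform of a det(S) measure is again det(S), so that the $\psi$ of \eqref{psi} is det(S) and hence natural. That claim is false, and it fails exactly in the case that needs care. When $\tilde{\phi}(\{0\})=0$, so that $\tilde{\phi}=\phi^{(0)}$, determinacy of $\tilde{\phi}$ tells you nothing of the sort about $\psi$: compare Theorem~\ref{det}$(i)$, which says $\phi^{(0)}$ is det(H) \emph{regardless} of whether $\psi$ is det(S) or indet(S); so dividing by $x$ can turn an indeterminate problem into a determinate one, and hence multiplying a det(S) measure by $x$ can land you on a solution of an indeterminate problem. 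The paper's proof is built around precisely this possibility: if $\tilde{\phi}(\{0\})>0$ it invokes \cite[Lemma~5.4]{B91a} to conclude $\psi$ is det(S); but if $\tilde{\phi}(\{0\})=0$ it allows $\psi$ to be indet(S) and uses the density-index results \cite[Theorems~2.1 and~2.4]{B91b} to show that $\psi$ is nevertheless the natural (Friedrichs) solution. So the relevant case split is on $\tilde{\phi}(\{0\})>0$ versus $\tilde{\phi}(\{0\})=0$, not on det(S) of $\phi^{(0)}$, and your step (3) cannot be justified as stated. (Your side remark that $\tilde{\mu}_0=0$ guarantees $0\notin\operatorname{supp}\phi^{(0)}$ is also not correct -- $0$ may be an accumulation point of the support -- but nothing in the argument needs it, since $\phi^{(0)}(\{0\})=0$ by construction.)

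Your first paragraph, by contrast, sketches a viable alternative that avoids Berg--Thill altogether: form the rate set whose dual is $\{\tilde{\lambda}_n,\tilde{\mu}_n\}$, note that divergence of the series transfers, let $\psi'$ be the natural measure it defines (Fact~\ref{fact3}), apply \eqref{dualpsi} with $\psi^d=\tilde{\phi}$, and verify by the atom/normalization computation that $\psi=\psi'$. Two repairs are needed: the inverse of \eqref{dualrates} is $\mu_n=\tilde{\lambda}_n$, $\lambda_n=\tilde{\mu}_{n+1}$ (your choice $\lambda_n:=\tilde{\mu}_n$ gives $\lambda_0=\tilde{\mu}_0=0$, which is not an admissible birth rate), and consequently $\mu_0$ is not a free parameter ``to be pinned down'': it is forced to equal $\tilde{\lambda}_0=m_1(\tilde{\phi})$, which is exactly what makes the normalizing constants match. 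With those corrections this duality route is sound, and it essentially yields the identification in Theorem~\ref{mu0=0} at the same stroke; the paper instead proves the lemma first, self-contained, via the two-case Berg--Thill argument, and only afterwards builds the rate set and the duality.
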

\begin{proof} If $\tilde{\phi}(\{0\}) > 0$ then, by \cite[Lemma~5.4]{B91a}, $\psi$ must be det(S), and
hence natural, since $\tilde{\phi}$ is~det(S).
If $\tilde{\phi}(\{0\})=0$ (so that $\tilde{\phi} = \tilde{\phi}^{(0)}$) and~$\psi$ is
indet(S), then, by \cite[Theorem~2.4]{B91b}, the {\em density index} of
$\tilde{\phi}^{(0)}$ (the largest $n\in\mathbb{N}$ such that the polynomials are
dense in $x^n\tilde{\phi}^{(0)}(dx)$) equals~2, implying that~$\psi$ has density index~1.
Hence, by \cite[Theorem 2.1]{B91b}, $\psi$ must be natural.
\end{proof}

Evidently, $m_{-1}(\psi) < \infty$, so we see that $\psi$ has the properties
imposed on~$\psi$ in Section~\ref{introduction}. Since
\begin{gather*}
\int_{[0,\infty)} \tilde{P}_1(x) \tilde{\phi}(dx) = \int_{[0,\infty)} (x-\tilde{\lambda}_0) \tilde{\phi}(dx) = 0,
\end{gather*}
we have $m_1(\tilde{\phi}) = \tilde{\lambda}_0$, while
\begin{gather*}
\psi([0,x]) = \frac{1}{m_1(\tilde{\phi})}\int_{[0,x]} y\tilde{\phi}(dy)
	= \frac{1}{\tilde{\lambda}_0} \int_{[0,x]} y\tilde{\phi}(dy), \qquad x \geq 0,
\end{gather*}
so that $m_{-1}(\psi) = (1-\tilde{\phi}(\{0\}))/\tilde{\lambda}_0$.
We can now associate a rate set $\{\lambda_n,\mu_n\}$ with~$\psi$ by letting
\begin{gather*}
\mu_0 = \frac{1}{(a+1)m_{-1}(\psi)} = \tilde{\lambda}_0,
\end{gather*}
so that $0 < \mu_0 \leq 1/m_{-1}(\psi)$,
and choosing $\lambda_n$ and $\mu_n$ such that the polynomials $P_n$ def\/ined by~\eqref{recP}
and~\eqref{rep} are orthogonal with respect to~$\psi$. Next identifying
$\tilde{\phi}$ with the measure $\phi^{(a)}$ def\/ined in~\eqref{phi0} and~\eqref{phia},
we can identify the rates $\tilde{\lambda}_n$ and $\tilde{\mu}_n$ with the rates $\lambda_n^{(a)}$ and
$\mu_n^{(a)}$, respectively, appearing in the recurrence relation~\eqref{recS} for the
shell polynomials corresponding to the sequence $\{P_n\}$. On comparing~\eqref{dual} and \eqref{dualrates}, we thus f\/ind
\begin{gather*}
\tilde{\lambda}_n = \lambda_n^d \qquad \mbox{and}\qquad \tilde{\mu}_n = \mu_n^d, \qquad n \geq 0.
\end{gather*}
It follows that the series \eqref{dualunique}, and hence the series \eqref{unique},
diverges, so that the rate set $\{\lambda_n,\mu_n\}$
def\/ines a unique birth-death process~$\mathcal{X}$. Moreover, $\tilde{\mathcal{X}} =
\mathcal{X}^d$, the dual process of~$\mathcal{X}$. In summary, we can state the converse of Theorem~\ref{mu0>0} as follows.

\begin{Theorem}\label{mu0=0}
Let $\{\tilde{\lambda}_n,\tilde{\mu}_n\}$ with $\tilde{\mu}_0 = 0$ be a rate set for which the analogue of
the series~\eqref{unique} diverges, $\tilde{\mathcal{X}}$ the birth-death process
defined by this set, and~$\tilde{\phi}$ the corresponding measure. Then, letting
$a = \tilde{\phi}(\{0\})/(1 - \tilde{\phi}(\{0\}))$, the measure $\tilde{\phi}$ can be identified with~$\phi^{(a)}$, defined by~\eqref{phi0} and~\eqref{phia}, where $\psi$ is the natural
measure corresponding to a birth-death process $\mathcal{X}$ with
$\mu_0=((a+1)m_{-1}(\psi))^{-1}>0$. Also, $\tilde{\mathcal{X}} = \mathcal{X}^d$, the dual process of~$\mathcal{X}$.
\end{Theorem}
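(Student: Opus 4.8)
The plan is to follow the bridge already laid in the preceding discussion: the body text running up to this theorem essentially \emph{is} the proof, so the task is mainly to organize the already-established links into a clean chain. First I would set up the measure $\phi^{(0)}$ and the parameter $a=\tilde\phi(\{0\})/(1-\tilde\phi(\{0\}))$ so that $\tilde\phi=(a+1)^{-1}(a\delta_0+\phi^{(0)})$, exactly matching the form~\eqref{phia}; then I would invoke the Lemma just proved to know that the measure $\psi$ obtained from $\phi^{(0)}$ via~\eqref{psi} is natural, hence eligible to play the role of the measure $\psi$ from Section~\ref{introduction}. Since $m_{-1}(\psi)<\infty$ by construction (indeed $m_{-1}(\psi)=(1-\tilde\phi(\{0\}))/\tilde\lambda_0$), Fact~\ref{fact3} lets us attach to $\psi$ a rate set $\{\lambda_n,\mu_n\}$, and we are free to choose $\mu_0=((a+1)m_{-1}(\psi))^{-1}$, which by Theorem~\ref{mu0a} is a legitimate value in $[0,1/m_{-1}(\psi)]$ and is strictly positive.

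Next I would identify $\tilde\phi$ with the shell measure $\phi^{(a)}$ of~\eqref{phi0}--\eqref{phia} corresponding to $\{P_n\}$, so that the recurrence coefficients $\tilde\lambda_n,\tilde\mu_n$ of the process $\tilde{\mathcal X}$ match the shell-polynomial coefficients $\lambda_n^{(a)},\mu_n^{(a)}$ of~\eqref{recS}. Comparing the duality relations~\eqref{dual} and~\eqref{dualrates} then gives $\tilde\lambda_n=\lambda_n^d$, $\tilde\mu_n=\mu_n^d$ for all $n\ge 0$; in particular $\mu_0=\tilde\lambda_0$, confirming the stated value of $\mu_0$. From $\pi_{n+1}^d=\mu_0(\lambda_n\pi_n)^{-1}$ and $(\lambda_n^d\pi_n^d)^{-1}=\mu_0^{-1}\pi_n$ (displayed in Section~\ref{dualbirthdeath}) one sees that divergence of the analogue of~\eqref{unique} for $\{\tilde\lambda_n,\tilde\mu_n\}$ — which holds by hypothesis — is equivalent to divergence of~\eqref{unique} for $\{\lambda_n,\mu_n\}$. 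Hence $\{\lambda_n,\mu_n\}$ defines a unique birth-death process $\mathcal X$, and by the definition in Section~\ref{dualbirthdeath} the process whose rate set is the dual rate set~\eqref{dualrates} is precisely $\mathcal X^d$; since that rate set is $\{\tilde\lambda_n,\tilde\mu_n\}$, we conclude $\tilde{\mathcal X}=\mathcal X^d$.

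The one point needing a little care — and the step I expect to be the only real obstacle — is verifying that $\psi$ defined by~\eqref{psi} genuinely satisfies \emph{all} standing hypotheses on the measure $\psi$ of Section~\ref{introduction}, namely that it is natural (so that it may be indet(S) only in the natural case) and that $m_{-1}(\psi)<\infty$. Naturalness is exactly the content of the Lemma, whose proof splits according to whether $\tilde\phi(\{0\})>0$ (then $\psi$ is det(S) by \cite[Lemma~5.4]{B91a}, $\tilde\phi$ being det(S)) or $\tilde\phi(\{0\})=0$ (then a density-index argument via \cite[Theorems~2.1 and~2.4]{B91b} forces $\psi$ to be natural); finiteness of $m_{-1}(\psi)$ is immediate since $m_{-1}(\psi)=m_1(\phi^{(0)})^{-1}\cdot\phi^{(0)}([0,\infty))\cdot(\text{const})$ is manifestly finite once one notes $m_1(\phi^{(0)})=\tilde\lambda_0>0$ from $\int(x-\tilde\lambda_0)\tilde\phi(dx)=0$. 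With these checks in place the chain $\tilde\phi=\phi^{(a)}\leadsto\psi\leadsto\{\lambda_n,\mu_n\}\leadsto\mathcal X$ and $\tilde{\mathcal X}=\mathcal X^d$ is complete, giving exactly the converse of Theorem~\ref{mu0>0} as stated.
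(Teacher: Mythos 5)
Your proposal is correct and follows essentially the same route as the paper's own derivation (the text preceding the theorem): decompose $\tilde{\phi}$ via $a$ and $\phi^{(0)}$, invoke the Lemma for naturalness of $\psi$ from~\eqref{psi}, compute $m_{-1}(\psi)=(1-\tilde{\phi}(\{0\}))/\tilde{\lambda}_0$, choose $\mu_0=((a+1)m_{-1}(\psi))^{-1}=\tilde{\lambda}_0$, identify $\tilde{\phi}$ with $\phi^{(a)}$ and compare~\eqref{dual} with~\eqref{dualrates} to get $\tilde{\lambda}_n=\lambda_n^d$, $\tilde{\mu}_n=\mu_n^d$, and transfer divergence of~\eqref{dualunique} to~\eqref{unique}. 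The only blemish is the harmless slip $m_1(\phi^{(0)})=\tilde{\lambda}_0$ in your last paragraph (it is $m_1(\tilde{\phi})=\tilde{\lambda}_0$, so $m_1(\phi^{(0)})=\tilde{\lambda}_0/(1-\tilde{\phi}(\{0\}))$), which does not affect the conclusion $m_{-1}(\psi)<\infty$.
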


Still residing in the setting of birth-death processes that are uniquely def\/ined by their
rates, we recall from Section~\ref{similarbirthdeath} that a collection of birth-death processes
sharing the same natural measure $\psi$ with f\/inite moment of order~$-1$ is called a
family of similar processes. The individual members of this family are identif\/ied by the
value of~$\mu_0$, which may be any value in the interval $0 \leq \mu_0 \leq 1/m_{-1}(\psi)$.
However, in view of the preceding observations, it seems more appropriate to exclude
the process with $\mu_0=0$ from this family and view this process as a member of
a new family of birth-death processes, which all have $\mu_0=0$ and a measure of the type
\begin{gather}\label{type}
\frac{1}{a+1} (a\delta_0 + \psi ),
\end{gather}
where $a \geq 0$, the process at hand corresponding to $a=0$.

Def\/ining families of birth-death processes in this way allows us to extend the
duality concept for individual birth-death processes to families of birth-death processes.
Indeed, if $\psi_1$ is a natural measure with $m_{-1}(\psi_1)<\infty$ then there is a
one-to-one correspondence between the family of similar processes with measure
$\psi_1$ and $\mu_0>0$, and the family of processes with $\mu_0=0$ and a~measure
of the type~\eqref{type}, where $a \geq 0$ and $\psi \equiv \psi_2$ is given by
\begin{gather*}
\psi_2([0,x]) = \frac{1}{m_{-1}(\psi_1)} \int_{[0,x]}y^{-1}\psi_1(dy), \qquad x \geq 0,
\end{gather*}
in the sense that corresponding processes are each other's dual. Note that $\psi_2$ is det(S) by Theorem~\ref{det}.
If $m_{-1}(\psi_2)<\infty$, or, equivalently, $m_{-2}(\psi_1)<\infty$, we can
view $\psi_2$ as the producer of a family of similar birth-death processes with
$\mu_0>0$, which, in turn, is dual to a family of processes with $\mu_0=0$ and a measure of the type \eqref{type}, etc.

Moving beyond the setting of birth-death processes that are uniquely def\/ined by
their rates the situation becomes more complicated, but as noted in Section~\ref{dualbirthdeath} the duality concept for rate sets can be extended, provided one restricts oneself to minimal and maximal processes, and links a minimal process
to a maximal process. We next elaborate on how this af\/fects the measures involved.

So consider again a rate set $\{\lambda_n,\mu_n\}$ with $\mu_0 > 0$, and the natural
measure $\psi$ def\/ined by this set. As before we can choose
$a = (\mu_0 m_{-1}(\psi))^{-1} - 1 \geq 0$ and thus link the rate set
$\{\lambda_n,\mu_n\}$ to the measure $\phi^{(a)}$ def\/ined in~\eqref{phi0}
and~\eqref{phia}. We now assume that the rate set $\{\lambda_n,\mu_n\}$ is such that
the series \eqref{unique} {\em converges}, so that, by Fact~\ref{fact4},
$\phi^{(a)}$ is indet(S) and $a>0$. Subsequently comparing~\eqref{phia}
and~\eqref{dualpsi1} we conclude that we actually have $\phi^{(a)} = \psi_0^d$.
It follows that $\phi^{(a)}$ is $N$-extremal (as noted already in~\cite{B11}) and
corresponds to the {\em maximal} birth-death process associated with the rate
set $\{\lambda_n^d,\mu_n^d\}$ with $\mu_0^d=0$, given by~\eqref{dualrates}.
Summarizing we can state the following.

\begin{Theorem}\label{mu0>0min}
Let $\{\lambda_n,\mu_n\}$ with $\mu_0 > 0$ be a rate set for which the series~\eqref{unique}
{\em converges}, $\mathcal{X}$~the {\em minimal} birth-death process defined
by this set, and $\psi$ the corresponding measure. Then $\mu_0<1/m_{-1}(\psi)$,
and $\phi^{(a)}$, defined by~\eqref{phi0} and~\eqref{phia}, is ${\rm indet(S)}$ for $a>0$.
Moreover, for $a = (\mu_0 m_{-1}(\psi))^{-1}-1$, $\phi^{(a)}$ is the measure
corresponding to $\mathcal{X}^d$, the {\em maximal} process that is dual to~$\mathcal{X}$, and hence $N$-extremal.
\end{Theorem}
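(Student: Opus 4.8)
The plan is to assemble Theorem~\ref{mu0>0min} from pieces that are, by this point, almost all in place, so the ``proof'' is really a matter of invoking the right earlier facts in the right order. First I would observe that since $\mu_0 > 0$, Fact~\ref{fact3} guarantees $m_{-1}(\psi) < \infty$ and $\mu_0 \leq 1/m_{-1}(\psi)$, so the choice $a = (\mu_0 m_{-1}(\psi))^{-1} - 1$ is well defined and $\geq 0$. Next I would apply Fact~\ref{fact4}: the series~\eqref{unique} converges precisely when neither condition (i) nor (ii) holds, i.e.\ $\psi$ is indet(S) \emph{and} $\mu_0 \neq 1/m_{-1}(\psi)$; combined with $\mu_0 \leq 1/m_{-1}(\psi)$ this forces the strict inequality $\mu_0 < 1/m_{-1}(\psi)$, equivalently $a > 0$. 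Then Theorem~\ref{det}(ii) gives that $\phi^{(a)}$ is indet(S) for this $a > 0$ (since $\psi$ is indet(S)), which already yields the second assertion.

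For the identification $\phi^{(a)} = \psi^d_0$, I would recall that the rate set $\{\lambda_n^d, \mu_n^d\}$ defined by~\eqref{dualrates} has $\mu_0^d = 0$ and, via~\eqref{dual} and the discussion around~\eqref{recS}, coincides with the shell-polynomial rate set $\{\lambda_n^{(a)}, \mu_n^{(a)}\}$ attached to $\phi^{(a)}$. Since~\eqref{unique} converges, so does the dual series~\eqref{dualunique}, so the rate set $\{\lambda_n^d,\mu_n^d\}$ does not uniquely define a process, and from Section~\ref{dualbirthdeath} the relevant measure for the \emph{maximal} process associated with this rate set is $\psi^d_0$, which by~\eqref{dualpsi1} satisfies $\psi^d_0([0,x]) = 1 - \mu_0 m_{-1}(\psi) + \mu_0 \int_{[0,x]} y^{-1}\psi(dy)$. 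Comparing this with the explicit form of $\phi^{(a)}$ obtained from~\eqref{phi0} and~\eqref{phia}, namely $\phi^{(a)}([0,x]) = \frac{a}{a+1} + \frac{1}{(a+1)m_{-1}(\psi)}\int_{[0,x]} y^{-1}\psi(dy)$, and substituting $a+1 = (\mu_0 m_{-1}(\psi))^{-1}$ so that $\frac{1}{(a+1)m_{-1}(\psi)} = \mu_0$ and $\frac{a}{a+1} = 1 - \mu_0 m_{-1}(\psi)$, the two expressions agree term by term. Hence $\phi^{(a)} = \psi^d_0$.

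It then remains only to record the consequences: by the correspondence just established, $\phi^{(a)}$ is the measure in the representation~\eqref{spectralrep} for the maximal process with rate set $\{\lambda_n^d,\mu_n^d\}$, which is precisely the maximal process dual (in the extended sense of Section~\ref{dualbirthdeath}, linking a minimal process to a maximal one) to the minimal process $\mathcal{X}$. Since $\psi^d_0$ is one of the $N$-extremal solutions of the indeterminate Stieltjes moment problem associated with the dual rate set (indeed $\psi^d_0 = \psi^d_\xi$ at $\xi = 0$ in the notation of Section~\ref{dualbirthdeath}), $\phi^{(a)}$ is $N$-extremal. I do not anticipate a genuine obstacle here; the only point requiring slight care is the bookkeeping of which $N$-extremal measure of the dual moment problem is attached to the maximal rather than the minimal dual process, and the verification that the normalization constant $a+1 = (\mu_0 m_{-1}(\psi))^{-1}$ makes the affine measures in~\eqref{phia} and~\eqref{dualpsi1} literally coincide — both of which are already spelled out in the text preceding the theorem.
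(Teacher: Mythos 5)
Your argument is correct and follows essentially the same route as the paper: Fact~\ref{fact4} plus Theorem~\ref{det} give $\mu_0<1/m_{-1}(\psi)$ (so $a>0$) and the indeterminacy of $\phi^{(a)}$, and the identification $\phi^{(a)}=\psi^d_0$ comes from comparing~\eqref{phia} with~\eqref{dualpsi1} under the substitution $a+1=(\mu_0 m_{-1}(\psi))^{-1}$, after which $N$-extremality and the maximal dual-process interpretation follow from Section~\ref{dualbirthdeath}. You merely make explicit the normalization computation and the rate-set bookkeeping that the paper compresses into ``comparing~\eqref{phia} and~\eqref{dualpsi1}''.
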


\begin{Remark}
The argument given in \cite{B11} for the fact that $\phi^{(a)}$ is $N$-extremal
is not entirely clear, but, in any case, a reference to \cite[Theorem~8]{B81}
(besides the reference to \cite[Theorem~5.5]{B91a} given in~\cite{B11}) is
suf\/f\/icient to justify the statement.
\end{Remark}

Of course there is a converse to Theorem~\ref{mu0>0min}~-- the analogue of Theorem~\ref{mu0=0}~-- which, however, we will not formulate explicitly.
It may be more interesting to look at the {\em minimal} process corresponding to
the rate set $\{\tilde{\lambda}_n,\tilde{\mu}_n\}$ with $\tilde{\mu}_0 = 0$ which does not uniquely def\/ine
a birth-death process, since the associated measure is natural. We give the result
without proof and refrain again from formulating its converse explicitly.

\begin{Theorem}\label{mu0=0min}
Let $\{\tilde{\lambda}_n,\tilde{\mu}_n\}$ with $\tilde{\mu}_0 = 0$ be a rate set for which the analogue of
the series~\eqref{unique} {\em converges}, $\tilde{\mathcal{X}}$ the {\em minimal} birth-death
process defined by this set, and $\tilde{\phi}$ the corresponding measure. Then,
letting $a = \tilde{\phi}(\{0\})/(1 - \tilde{\phi}(\{0\}))$, the measure $\tilde{\phi}$ can be
identified with $\phi^{(a)}$, defined by~\eqref{phi0} and~\eqref{phia}, where~$\psi$
is the $($natural$)$ measure corresponding to a {\em maximal} birth-death process
$\mathcal{X}$ with $\mu_0=((a+1)m_{-1}(\psi))^{-1}>0$, and hence $N$-extremal.
Also, $\tilde{\mathcal{X}} = \mathcal{X}^d$, the dual process of~$\mathcal{X}$.
\end{Theorem}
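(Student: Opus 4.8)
The plan is to mirror the proof of Theorem~\ref{mu0=0}, replacing ``unique'' by ``minimal'' throughout and keeping careful track of which $N$-extremal measures are the natural ones. First I would start from the rate set $\{\tilde\lambda_n,\tilde\mu_n\}$ with $\tilde\mu_0=0$ for which the analogue of~\eqref{unique} converges, and let $\tilde{\mathcal X}$ be the minimal process it defines. By the remarks preceding~\eqref{spectralrep}, the minimal process corresponds to the \emph{natural} solution of the associated Stieltjes moment problem, so $\tilde\phi$ is natural; in particular its atom at $0$ has size $\tilde\phi(\{0\})\in[0,1)$, and setting $a=\tilde\phi(\{0\})/(1-\tilde\phi(\{0\}))$ together with $\phi^{(0)}=(\tilde\phi-\tilde\phi(\{0\})\delta_0)/(1-\tilde\phi(\{0\}))$ gives the decomposition $\tilde\phi=\tfrac1{a+1}(a\delta_0+\phi^{(0)})$ exactly as in the paragraph before Theorem~\ref{mu0=0}. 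Then I would define $\psi$ by~\eqref{psi} and invoke the Lemma (its proof uses only that $\tilde\phi$ is natural/det(S) or det(S) — which holds here for $\phi^{(0)}$ since $\phi^{(0)}$ is det(H) and hence, being the derived measure, the argument via \cite[Theorems~2.1 and~2.4]{B91b} still applies) to conclude $\psi$ is the natural solution of its moment problem, with $m_{-1}(\psi)<\infty$.

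Next I would run the same identification as in the $\mu_0=0$ case: compute $m_1(\tilde\phi)=\tilde\lambda_0$ from $\int\tilde P_1\,d\tilde\phi=0$, deduce $m_{-1}(\psi)=(1-\tilde\phi(\{0\}))/\tilde\lambda_0$, set $\mu_0=((a+1)m_{-1}(\psi))^{-1}=\tilde\lambda_0>0$, and choose $\lambda_n,\mu_n$ so that the monic polynomials $P_n$ of~\eqref{recP}--\eqref{rep} are orthogonal with respect to $\psi$. Identifying $\tilde\phi$ with $\phi^{(a)}$ of~\eqref{phi0}--\eqref{phia}, the rates $\tilde\lambda_n,\tilde\mu_n$ become the shell-polynomial rates $\lambda_n^{(a)},\mu_n^{(a)}$ of~\eqref{recS}, and comparing~\eqref{dual} with~\eqref{dualrates} yields $\tilde\lambda_n=\lambda_n^d$, $\tilde\mu_n=\mu_n^d$. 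Since the analogue of~\eqref{unique} for $\{\tilde\lambda_n,\tilde\mu_n\}$ converges, the equivalence of~\eqref{unique} and~\eqref{dualunique} (established in Section~\ref{dualbirthdeath}) forces~\eqref{unique} for $\{\lambda_n,\mu_n\}$ to converge too, so this rate set does \emph{not} uniquely define a process; it does, however, define a minimal process (with natural measure $\psi$) and a maximal process.

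The crux is then to show that $\mathcal X$ — the process whose measure is $\psi$ and which is dual to $\tilde{\mathcal X}$ — is precisely the \emph{maximal} member of the rate set $\{\lambda_n,\mu_n\}$, and that $\psi$ is $N$-extremal in the sense that it is an $N$-extremal solution of the \emph{other} Stieltjes problem (the one associated with the rate set having $\mu_0=0$). Here I would lean on the third ``fact'' from Section~\ref{dualbirthdeath}: when~\eqref{unique} converges, formula~\eqref{dualpsi} remains valid precisely when $\psi^d$ is replaced by $\psi_0^d$ \emph{or} $\psi$ is replaced by $\psi_{\xi_1^d}$. Since $\tilde\phi=\phi^{(a)}$ with $a>0$ is the natural measure $\psi^d=\psi^d_{\xi_1^d}$ of the minimal process $\tilde{\mathcal X}$, and since $\phi^{(a)}=\tfrac1{a+1}(a\delta_0+\phi^{(0)})$ matches the right-hand side of~\eqref{dualpsi} with $\mu_0 m_{-1}(\psi)<1$, the only consistent reading is that the $\psi$ appearing there is the $N$-extremal measure $\psi_{\xi_1^d}$ — which, by Section~\ref{dualbirthdeath}, is exactly the measure of the \emph{maximal} process with rates $\{\lambda_n,\mu_n\}$. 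This pins down $\mathcal X$ as maximal and $\psi$ as $N$-extremal. The main obstacle I anticipate is bookkeeping the indices: one must be sure that the ``$\psi$'' named in the theorem (the natural-looking orthogonalizing measure of the $P_n$) is genuinely the $N$-extremal $\psi_{\xi_1^d}$ and not the natural $\psi_{\xi_1}$ of the same rate set, and that the duality-of-rates map $\{\lambda_n,\mu_n\}\leftrightarrow\{\lambda_n^d,\mu_n^d\}$ carries the maximal process on one side to the minimal process on the other, consistently with the separation result $0<\xi_i^d<\xi_i<\xi_{i+1}^d$ and with~\eqref{dualp}. Once that correspondence is nailed down, the statement — including $\mu_0=((a+1)m_{-1}(\psi))^{-1}>0$ and $\tilde{\mathcal X}=\mathcal X^d$ — follows by assembling the pieces, just as Theorem~\ref{mu0=0} followed from the analogous assembly in the divergent case.
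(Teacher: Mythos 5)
Your first step is where the argument breaks. The Lemma was established under the standing assumption of that passage, namely that the analogue of \eqref{unique} \emph{diverges}, so that $\tilde{\phi}$ is det(S); its proof uses this explicitly (``$\psi$ must be det(S), and hence natural, since $\tilde{\phi}$ is det(S)''). In the present theorem the series converges, $\tilde{\phi}$ is indet(S), and the conclusion you want to import is in fact false: the measure produced by \eqref{psi} is \emph{not} natural here. To see this, set $\mu_n=\tilde{\lambda}_n$, $\lambda_n=\tilde{\mu}_{n+1}$, so that $\{\tilde{\lambda}_n,\tilde{\mu}_n\}$ is the dual of $\{\lambda_n,\mu_n\}$ and $\tilde{\phi}=\psi^d$, the natural measure of the dual set. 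The displayed companion of \eqref{dualpsi1} in Section~\ref{dualbirthdeath} gives $\psi^d([0,x])=1-\mu_0 m_{-1}(\psi_{\xi_1^d})+\mu_0\int_{[0,x]}y^{-1}\psi_{\xi_1^d}(dy)$, and inverting it shows that normalizing $y\,\tilde{\phi}(dy)$, i.e., applying \eqref{psi}, yields exactly $\psi_{\xi_1^d}$ --- the $N$-extremal measure of the \emph{maximal} $\{\lambda_n,\mu_n\}$-process --- which by the separation result $0<\xi_1^d<\xi_1$ is distinct from the natural solution $\psi_{\xi_1}$. So your proposal is internally inconsistent: the first paragraph asserts $\psi$ natural, while your third paragraph (correctly) identifies it with $\psi_{\xi_1^d}$. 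The inconsistency is not cosmetic: were $\psi$ natural, \eqref{dualpsi1} would identify the resulting $\phi^{(a)}$ with $\psi_0^d$, the measure of the \emph{maximal} process with the tilde rates, contradicting the hypothesis that $\tilde{\phi}$ belongs to the minimal one. (Your parenthetical justification also fails: Theorem~\ref{det} presupposes a natural underlying measure, and here the atomless part of $\tilde{\phi}$ is indet(S), hence certainly not det(H).)

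Note also that the paper states Theorem~\ref{mu0=0min} without proof, so there is no official argument to match; the intended proof is plainly the mirror image of that of Theorem~\ref{mu0>0min}, and your third paragraph essentially contains it --- make that the whole proof and discard the Lemma step. Namely: $\tilde{\phi}=\psi^d$; comparing \eqref{phi0}--\eqref{phia} with the displayed companion of \eqref{dualpsi1} gives $\tilde{\phi}=\phi^{(a)}$ with $\psi=\psi_{\xi_1^d}$ and $a/(a+1)=1-\mu_0 m_{-1}(\psi)$, i.e., $\mu_0=((a+1)m_{-1}(\psi))^{-1}>0$; since $\psi_{\xi_1^d}$ corresponds to the maximal $\{\lambda_n,\mu_n\}$-process $\mathcal{X}$, and duality in the indeterminate case is by convention the pairing of a minimal with a maximal process (end of Section~\ref{dualbirthdeath}), it follows that $\tilde{\mathcal{X}}=\mathcal{X}^d$ and that $\psi$ is $N$-extremal, as claimed. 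As a consistency check, evaluating the displayed relation at $x<\xi_1^d$ forces $\mu_0 m_{-1}(\psi_{\xi_1^d})=1$, so in fact $\tilde{\phi}(\{0\})=0$ and $a=0$: the atom-at-zero bookkeeping you imported from Theorem~\ref{mu0=0} never materializes in this case, a further sign that the det(S)-based Lemma is the wrong tool here.
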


We f\/inally remark that Theorems~\ref{mu0>0} and~\ref{mu0>0min} augment the
information given in \cite[Lemma~2]{K57a}, while Theorems~\ref{mu0=0} and~\ref{mu0=0min} elaborate on \cite[Lemma~3]{K57a}.

\pdfbookmark[1]{References}{ref}
\LastPageEnding

\end{document}